\newtheorem{theorem}{Theorem}[section]
\newtheorem{proposition}[theorem]{Proposition}
\newtheorem{lemma}[theorem]{Lemma}
\newtheorem{corollary}[theorem]{Corollary}
\newtheorem{definition}[theorem]{Definition}
\newtheorem{remark}[theorem]{Remark}
\newtheorem{remarks}[theorem]{Remarks}
\newtheorem{questions}[theorem]{Questions}
\newcommand{\CC}{\ensuremath{\mathbb{C}}} %complex numbers
\newcommand{\NN}{\ensuremath{\mathbb{N}}} %natural numbers
\newcommand{\ZZ}{\ensuremath{\mathbb{Z}}} %integers
\newcommand{\lspan}{\mathop{\mathrm{span}}} %linear span
\newcommand{\ran}{\mathop{\mathrm{ran}}} %range of a map on a space
\newcommand{\supp}{\mathop{\mathrm{supp}}} %support of a function
\newcommand{\id}{\mathrm{id}} %identity operator
\newcommand{\defeq}{:=} %equality symbol used in definitions
\newcommand{\inv}{^{-1}} %shortcut to denote inverse
\newcommand{\HH}{\ensuremath{\mathcal{H}}} %typical Hilbert space
\newcommand{\ip}[2]{\left\langle #1 , #2 \right\rangle} %Hilbert space inner product
\newcommand{\nrm}[1][\cdot]{\Vert #1 \Vert} %norm
\newcommand{\dualp}[2]{\left\langle #1 , #2 \right\rangle} %dual pairing notation - first entry from dual space
\newcommand{\Bd}[1][\HH]{\ensuremath{\mathcal{B}(#1)}} %bounded operators on a space, defaults to Hilbert space H
\newcommand{\Cpt}[1][\HH]{\ensuremath{\mathcal{K}(#1)}} %compact operators on a space, defaults to Hilbert space H
\newcommand{\btens}{\mathbin{\overline{\otimes}}} %von Neumann spatial tensor product
\newcommand{\Aut}{\mathop{\mathrm{Aut}}} %automorphism group of a space
\newcommand{\vN}[1][\grp]{\ensuremath{\mathrm{vN}(#1)}} %group von Neumann algebra
\newcommand{\rgCst}[1][G]{\ensuremath{C^*_r(#1)}} %reduced group C*-algebra
\newcommand{\rcros}[3]{\ensuremath{#1 \rtimes_{{#3},r}  #2}} %reduced crossed product
\newcommand{\rcrs}{\rcros{A}{G}{\alpha}} %shortcut for reduced crossed product of the system (A,G,alpha)
\newcommand{\EE}{\ensuremath{\mathcal{E}}} %conditional expectation from reduced crossed product to C*-algebra
\newcommand{\falg}[1][\grp]{\ensuremath{A(#1)}} %Fourier algebra of a group
\newcommand{\Qcent}{Q_{\rm cent}} %completion of C_c under duality
\newcommand{\CB}[1][A]{\ensuremath{\mathcal{CB}(#1)}} %completely bounded operators on a space, defaults to C*-algebra A
\newcommand{\cb}{{\mathrm{cb}}} %for cb norm subscript
\newcommand{\dec}{{\mathrm{dec}}} %for dec norm subscript
\newcommand{\Mcb}{{\mathrm{M}^{\mathrm{cb}}}} %completely bounded multipliers
\newcommand{\HS}{{\mathrm{m}}} %Herz--Schur multiplier norm subscript
\newcommand{\Sch}{{\mathfrak{S}}} %Schur multiplier norm subscript
\newcommand{\HSmults}[3]{\ensuremath{\mathfrak{S}(#1,#2,#3)}} %Herz-Schur multipliers of the specified system
\newcommand{\HSm}{\HSmults{A}{G}{\alpha}} %Herz-Schur multipliers of the system (A,G,alpha)
\begin{document}

\title{Exactness and SOAP of Crossed Products via Herz--Schur multipliers}

\author[A. McKee]{Andrew McKee}
\address{Department of Mathematical Sciences,
Chalmers University of Technology and  the University of Gothenburg,
Gothenburg SE-412 96, Sweden}
\email{amckee240@qub.ac.uk}

\author[L. Turowska]{Lyudmila Turowska}
\address{Department of Mathematical Sciences,
Chalmers University of Technology and  the University of Gothenburg,
Gothenburg SE-412 96, Sweden}
\email{turowska@chalmers.se}

\keywords{crossed product, exactness , multipliers}

\begin{abstract}
Given a $C^*$-dynamical system $(A,G,\alpha)$, with $G$ a discrete group, Schur $A$-multipliers and Herz--Schur $(A,G,\alpha)$-multipliers are used to implement approximation properties, namely exactness and the strong operator approximation property (SOAP), of $\rcrs$. The resulting characterisations of exactness and SOAP of $\rcrs$ generalise the corresponding statements for the reduced group $C^*$-algebra.
\end{abstract}

\maketitle

\section{Introduction}
\label{sec:intro}

\noindent
Recently in \cite{MTT18} the notion of classical Schur multipliers, which has been intensively studied in the literature (see {\it e.g.}\ \cites{Gro53,LaSa11,Pau02,Pis03}), was generalised to the operator-valued setting:
for a $C^*$-algebra $A\subset \Bd$ on a Hilbert space $\HH$, and a set $X$, Schur $A$-multipliers were defined as functions $\varphi :X \times X\to \CB[A,\Bd]$ such that the associated map $S_\varphi : \Cpt[\ell^2(X)] \otimes A\to \Cpt[\ell^2(X)] \otimes \Bd$ is completely bounded.  
Among many applications of classical Schur multipliers in Operator Theory is Ozawa's characterisation of exactness of the reduced $C^*$-algebra of a discrete group $G$ \cite{Oza00}: for a certain family of Schur multipliers $(\varphi_i : G \times G\to \CC)_i$ the associated maps $S_{\varphi_i}$ implement exactness of $C_r^*(G)$. 
In this paper we will use Schur $A$-multipliers to generalise this result to the setting of reduced crossed products. 

Schur multipliers are related to the notion of Herz--Schur multipliers associated to groups. 
The latter are functions $\psi : G \to \CC$ on a locally compact group $G$ that give rise to completely bounded maps on the reduced $C^*$-algebra. 
It is known that they constitute the ``invariant" part of the Schur multipliers on $G\times G$. 
Herz--Schur multipliers for the reduced crossed product were defined in \cite{BC16} for discrete groups and in \cite{MTT18} for general locally compact groups; in the latter they were also related to Schur $A$-multipliers in a similar manner as for the group case. 
Herz--Schur multipliers of special type are known to encode such approximation properties as nuclearity, the Haagerup approximation property, the completely bounded approximation property (CBAP) and the (strong) operator approximation property ((S)OAP) of the reduced $C^*$-algebra of a discrete group. 
Recently it was shown that Herz--Schur $(A,G,\alpha)$-multipliers do the same for the reduced crossed product $\rcrs$ and the first three approximation properties, see \cites{MSTT18,McKee}. 
B\'{e}dos--Conti~\cite{BC16}*{Section 4} have also used similar techniques to investigate regularity of $C^*$-dynamical systems: they give a condition involving Herz--Schur $(A,G,\alpha)$-multipliers which implies the full and reduced crossed products are canonically isomorphic.
In this note we shall give a characterisation of SOAP for $\rcrs$ in terms of Herz--Schur multipliers for crossed products.

The paper is organised as follows. In Section~\ref{sec:prelims} we fix notation and recall the notions of Schur $A$-multipliers and Herz--Schur multipliers of a $C^*$-dynamical system.
The main result of Section~\ref{sec:exactness} is a characterisation of exactness of a reduced crossed product in terms of the existence of certain Schur $A$-multipliers; from this characterisation we are able to deduce the known result that exactness is preserved by actions that satisfy a certain approximation property. 
%We also deduce Ozawa's characterisation of exact discrete groups by combining the main result of Section~\ref{sec:exactness} with a characterisation of nuclearity of reduced crossed products from \cite{MSTT18}. 
Finally, in Section~\ref{sec:OAP} we give a characterisation of when a reduced crossed product has the strong operator approximation property in terms of the existence of certain Herz--Schur multipliers of the dynamical system. 
We finish with some observations about duality for the space of Herz--Schur multipliers and a certain commutative subspace thereof.

\section{Preliminaries}
\label{sec:prelims}

\noindent 
Throughout $G$ is a discrete group and $A$ is a unital $C^*$-algebra acting on the Hilbert space $\HH$.
Standard notation for tensor products will be used: the minimal tensor product of $C^*$-algebras will be written $A \otimes B$, and the normal spatial tensor product of von~Neumann algebras will be written $M \btens N$; the Hilbert space tensor product will be written $\HH \otimes \mathcal{K}$. 
We will make heavy use of the theory of operator spaces, complete boundedness and complete positivity; a suggested reference for background on these topics is \cite{EfRu00}.

\subsection{Dynamical Systems}
\label{ssec:dynamicalsystems}

Take a discrete group $G$ and a group homomorphism $\alpha : G \to \Aut(A)$; the triple $(A,G,\alpha)$ is called a $C^*$-dynamical system and $\alpha$ is called an action of $G$ on $A$. 
Using the representation $A \subseteq \Bd$ we define representations of $A$ and $G$ on $\ell^2(G) \otimes \HH \cong \ell^2(G,\HH)$ by
\[
	\big( \pi(a) \xi \big)(s) \defeq \alpha_s\inv(a) \xi(s) , \ \lambda_r \xi(s) \defeq \xi (r\inv s) , \ a \in A,\ r,s \in G,\ \xi \in \ell^2(G,\HH) .
\]
The pair $(\pi , \lambda)$ satisfies the covariance condition 
\[
	\lambda_r \pi(a) \lambda_r^* = \pi \big( \alpha_r(a) \big) , \quad a \in A ,\ r \in G ,
\]
and therefore defines a $*$-representation of $\ell^1(G,A)$ on $\ell^2(G,\HH)$ by its integrated form
\[
	\pi \rtimes \lambda (f) \defeq \sum_{s \in G} \pi \big( f(s) \big) \lambda_s , \quad f \in \ell^1(G,A) .
\]
The \emph{reduced crossed product} associated to the system $(A,G,\alpha)$ is the $C^*$-algebra defined as the closure of $\pi \rtimes \lambda(\ell^1(G,A))$ in the operator norm of $\Bd[\ell^2(G,\HH)]$, and denoted $\rcrs$.
We refer to \cite{BrO08}*{Section 4.1} for the details of this construction, including the fact that the resulting $C^*$-algebra $\rcrs$ does not depend on the initial choice of faithful representation $A \subseteq \Bd$.

Note that when $A = \CC$ and the action $\alpha$ is trivial the reduced crossed product $C^*$-algebra defined above is the reduced group $C^*$-algebra $\rgCst$.

Identify $\ell^2(G) \otimes \HH$ with $\oplus_{s \in G} \HH$, so that each element $x \in \Bd[\ell^2(G) \otimes \HH]$ can be identified in the usual way with a matrix $(x_{s,t})_{s,t \in G}$, where each entry comes from $\Bd[\HH]$.
%Indeed, $x_{s,t} \defeq P_s^* x P_t$, where $P_g$ is the projection on the $g$th component of $\oplus_{s \in G} \HH$.
One can check that the main diagonal of $x \in \rcrs$, that is $\{ x_{s,t} : s\inv t = e \}$, is given by $\{ \alpha_p\inv(a_e) : p \in G \}$ for some $a_e \in A$. 
The map $\EE_A$ which takes $x \in \rcrs$ to $a_e \in A$ is a conditional expectation \cite{BrO08}*{Proposition 4.1.9}, and is equivariant in the sense that
\[
	\alpha_s \big( \EE_A(x) \big) = \EE_A(\lambda_s x \lambda_s^*) , \quad s \in G .
\]
%This can be checked on $x = \pi \rtimes \lambda(f)$, where $f$ is finitely supported, and then follows by density.

\subsection{Multipliers}
\label{ssec:multipliers}

In this section we summarise the definitions and results on Schur and Herz--Schur multipliers from \cite{MTT18} needed for this paper (though we use some conventions from \cite{MSTT18} which are more convenient).
Note that because we work only with discrete spaces and counting measure the separability assumptions of \cite{MTT18} are not required here.

Given $k \in \ell^2(X \times X ,A)$ define an operator $T_k : \ell^2(X,\HH) \to \ell^2(X,\HH) $ by
\[
 (T_k \xi)(x) \defeq \sum_{y \in X} k(x,y) \big( \xi(y) \big) , \quad  x \in X,\ \xi \in \ell^2(X , \HH) .
\]
It is easily checked that $T_k$ is a bounded operator and $\nrm[T_k] \leq \nrm[k]_2$. The collection of all such $T_k$ forms a dense subset of $\Cpt[\ell^2(X)] \otimes A$.
Let $\varphi : X \times X \to \CB[A , \Bd]$ be a bounded function and, for $k \in \ell^2(X \times X ,A)$, define
\[
	\varphi \cdot k (x,y) \defeq \varphi(x,y) \big( k(x,y) \big) , \quad x,y \in X .
\]
Clearly the map $S_\varphi : T_k \mapsto T_{\varphi \cdot k}$ is bounded with respect to the norm $\|\cdot\|_2$; we say $\varphi$ is a \emph{Schur $A$-multiplier} if $S_\varphi$ extends to a completely bounded map from $\Cpt[\ell^2(X)] \otimes A$ to $\Cpt[\ell^2(X)] \otimes \Bd$; in this case we write $\nrm[\varphi]_\Sch \defeq \nrm[S_\varphi]_\cb$.
Those functions $\varphi$ which are Schur $A$-multipliers are characterised by Stinespring-type dilation results --- see \cite{MTT18}*{Theorem 2.6} and \cite{MSTT18}*{Theorem 2.6} for a similar characterisation of when $S_\varphi$ is completely positive, in the latter case we say that $\varphi$ is a positive Schur-$A$-multiplier. 
We recall those results here.

\begin{theorem}\label{schura}
Let $X$ be a set, $\HH$ a Hilbert space, $A\subset \Bd$ a $C^*$-algebra, and $\varphi : X\times X \to \CB[A,\Bd]$ a bounded function. The following are equivalent:
\begin{enumerate}[i.]
	\item $\varphi$ is a Schur $A$-multiplier;
	\item there exists a Hilbert space $\HH_\rho$, a non-degenerate $*$-representation $\rho : A \to \Bd[\HH_\rho]$, and bounded functions $V, W : X \to \Bd[\HH,\HH_\rho]$ 
such that
\[
	\varphi(x,y)(a) = V(x)^* \rho(a) W(y), \quad x,y\in X,\ a\in A.
\]
\end{enumerate}
Moreover, if (i) holds true the functions $V$ and $W$ in (ii) can be chosen so that $\|\varphi\|_\Sch = \sup_{x\in X} \|V(x)\|\sup_{y\in X}\|W(y)\|$. 

If $\varphi$ in (i) is a positive Schur $A$-multiplier then one can choose $V=W$ in (ii) with the norm equality $\|\varphi\|_\Sch = \sup_{x\in X} \|\varphi(x,x) \|_\cb = \sup_{x\in X}\|V(x)\|^2$. 
\end{theorem}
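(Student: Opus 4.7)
The easy direction (ii)$\Rightarrow$(i) proceeds by assembling $V,W$ into ``diagonal'' operators. I would define $\tilde V,\tilde W\in\Bd[\ell^2(X,\HH),\ell^2(X,\HH_\rho)]$ by $(\tilde V\xi)(x)=V(x)\xi(x)$ and $(\tilde W\xi)(x)=W(x)\xi(x)$; these are bounded with norms $\sup_x\|V(x)\|$ and $\sup_y\|W(y)\|$. A direct computation on kernel operators gives $S_\varphi(T_k)=\tilde V^*\,(\id\otimes\rho)(T_k)\,\tilde W$, where $\id\otimes\rho:\Cpt[\ell^2(X)]\otimes A\to\Cpt[\ell^2(X)]\otimes\Bd[\HH_\rho]$ is the induced $*$-representation. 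This exhibits $S_\varphi$ as a composition of completely bounded maps and yields $\|\varphi\|_\Sch\le\sup_x\|V(x)\|\sup_y\|W(y)\|$.

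For the harder direction (i)$\Rightarrow$(ii) the plan is to apply a Wittstock--Paulsen dilation theorem to the completely bounded map $S_\varphi:\Cpt[\ell^2(X)]\otimes A\to\Bd[\ell^2(X,\HH)]$. This yields a Hilbert space $\tilde\HH$, a $*$-representation $\pi$ of $\Cpt[\ell^2(X)]\otimes A$ on $\tilde\HH$, and bounded operators $V,W:\ell^2(X,\HH)\to\tilde\HH$ with $S_\varphi(T)=V^*\pi(T)W$ and $\|V\|\,\|W\|=\|S_\varphi\|_\cb$. Next I would exploit the tensor structure: restricting $\pi$ to $\Cpt[\ell^2(X)]\otimes 1_A$ gives a non-degenerate representation of the compacts (after compressing to the essential subspace), which by the standard classification yields a unitary equivalence $\tilde\HH\cong\ell^2(X)\otimes\HH_\rho$ with $\pi(K\otimes 1_A)=K\otimes 1_{\HH_\rho}$. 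Since $\Cpt[\ell^2(X)]\otimes 1_A$ commutes with $1\otimes A$, the element $\pi(1\otimes a)$ lies in the commutant of the compacts inside $\Bd[\ell^2(X)\otimes\HH_\rho]$, and hence has the form $1\otimes\rho(a)$ for a $*$-representation $\rho:A\to\Bd[\HH_\rho]$, which a further compression makes non-degenerate.

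To extract the functions $V(x),W(y)$, I would write $V,W$ as operator matrices $(V_{u,x})$ and $(W_{v,y})$ with entries in $\Bd[\HH,\HH_\rho]$ and test the identity $S_\varphi(T)=V^*\pi(T)W$ against the rank-one elements $T=e_{x_0,y_0}\otimes a$; since $\pi(e_{x_0,y_0}\otimes a)=e_{x_0,y_0}\otimes\rho(a)$, the matrix identity forces $V_{x_0,x_0}^*\rho(a)W_{y_0,y_0}=\varphi(x_0,y_0)(a)$ for all $a\in A$. Setting $V(x):=V_{x,x}$ and $W(y):=W_{y,y}$ produces the desired factorisation, and $\sup_x\|V(x)\|\sup_y\|W(y)\|\le\|V\|\,\|W\|=\|\varphi\|_\Sch$ yields the sharp norm equality when combined with the first direction. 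For the positive case, I would replace Wittstock--Paulsen by Stinespring, obtaining a single $V$ with $S_\varphi(T)=V^*\pi(T)V$ and $\|V\|^2=\|S_\varphi\|_\cb$; the same extraction gives $\varphi(x,y)(a)=V(x)^*\rho(a)V(y)$, and the equality $\|\varphi(x,x)\|_\cb=\|V(x)\|^2$ follows because a completely positive map from a unital $C^*$-algebra attains its cb norm at the unit, with $\varphi(x,x)(1_A)=V(x)^*V(x)$. The main obstacles I anticipate are the careful application of the dilation theorem to the non-unital algebra $\Cpt[\ell^2(X)]\otimes A$ (handled by a bounded approximate identity argument or unitisation) and ensuring that the successive compressions used to decompose $\pi$ as $K\otimes 1_{\HH_\rho}$ with $\rho$ non-degenerate do not worsen the norm estimates.
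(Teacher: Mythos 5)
The paper does not prove this statement: Theorem~\ref{schura} is quoted verbatim from \cite{MTT18}*{Theorem 2.6} and \cite{MSTT18}*{Theorem 2.6}, so there is no internal proof to compare against. That said, your argument is a correct reconstruction of the standard dilation proof in the discrete case, and for (ii)$\Rightarrow$(i) it coincides with what the paper itself uses implicitly when it writes $S_\varphi(T)=V^*(\id\otimes\rho)(T)W$ for the extension to $\Bd[\ell^2(X)]\otimes A$. Two points in the hard direction deserve to be made explicit. First, $1_{\Bd[\ell^2(X)]}\otimes a$ does not belong to $\Cpt[\ell^2(X)]\otimes A$ when $X$ is infinite, so you cannot literally form $\pi(1\otimes a)$; you must first extend $\pi$ to the multiplier algebra $M(\Cpt[\ell^2(X)]\otimes A)$, which requires knowing that $\pi$ is non-degenerate on the subspace you have compressed to. Second, the compression to the essential subspace $P\tilde\HH$ of $\pi|_{\Cpt[\ell^2(X)]\otimes 1_A}$ must be shown to preserve the identity $S_\varphi(T)=V^*\pi(T)W$; both issues are settled simultaneously by observing that $(e_F\otimes 1_A)_F$ (with $e_F$ the finite-rank projections) is an approximate unit for $\Cpt[\ell^2(X)]\otimes A$, whence $\pi(T)=P\pi(T)=\pi(T)P$ for all $T$ and the essential subspaces of $\pi$ and of $\pi|_{\Cpt[\ell^2(X)]\otimes 1_A}$ coincide. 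With these insertions the matrix-entry extraction, the norm bookkeeping, and the positive case via Stinespring (using that a completely positive map attains its cb-norm at the unit) all go through as you describe. Note also that the cited theorem in \cite{MTT18} is proved for general (non-discrete) measure spaces, where measurability of the fields $V,W$ forces a genuinely harder argument; your proof only covers the discrete setting, which is all this paper uses.
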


Suppose that $\varphi$ is a Schur $A$-multiplier. Then the map $S_\varphi:\Cpt[\ell^2(X)] \otimes A\to\Cpt[\ell^2(X)] \otimes \Bd$ has a canonical extension to a map from $\Bd[\ell^2(X)]\otimes A$ into the von Neumann tensor product $\Bd[\ell^2(X)]\btens\Bd$ defined as follows.
Consider the second dual of the map $S_\varphi$: 
\[
    S_\varphi^{**} : ( \Cpt[\ell^2(G)] \otimes A)^{**} \to \Bd[\ell^2(G)] \btens \Bd^{**} .
\]
Writing $E$ for the conditional expectation from $\Bd[\ell^2(G)] \btens \Bd^{**}$ to the space $\Bd[\ell^2(G)] \btens \Bd$ we obtain a completely bounded map
\[
    \Phi \defeq E \circ S_\varphi^{**} : ( \Cpt[\ell^2(G)] \otimes A)^{**} =\Bd[\ell^2(X)]\btens A^{**}\to \Bd[\ell^2(G) \otimes \HH] ;
\]
we shall also write $S_\varphi$ for the restriction of $\Phi$ to $\Bd[\ell^2(X)]\otimes A$ and this is the required extension. It is easy to see that if $\rho$, $V$, $W$ are as in Theorem~\ref{schura}(ii), and considering $V$ and $W$ as operators from $\ell^2(X)\otimes \HH$ to $\ell^2(X)\otimes \HH_\rho$ we have that the extension satisfies
\[
	S_\varphi(T)=V^*(\id\otimes\rho)(T)W, \quad T\in \Bd[\ell^2(X)]\otimes A.
\]
In what follows writing $S_\varphi$ we shall often mean the extension to $\Bd[\ell^2(X)]\otimes A$.

Now consider a $C^*$-dynamical system $(A,G,\alpha)$ and let $F : G \to \CB$ be a bounded function. For $f \in \ell^1(G,A)$ define
\[
	F \cdot f (r) \defeq F(r) \big( f(r) \big) , \quad r \in G .
\]
We say that $F$ is a Herz--Schur $(A,G,\alpha)$-multiplier if the map $S_F : \pi \rtimes \lambda(f) \mapsto \pi \rtimes \lambda(F \cdot f)$ extends to a completely bounded map on $\rcrs$; in this case we write $\nrm[F]_\HS \defeq \nrm[S_F]_\cb$.
The collection of Herz--Schur $(A,G,\alpha)$-multipliers will be denoted $\HSm$.

The functions $F$ which are Herz--Schur $(A,G,\alpha)$-multipliers can be characterised in terms of Schur $A$-multipliers: given $F : G \to \CB$ define
\[
	\mathcal{N}(F) (s,t)(a) \defeq \alpha_{s\inv} \big( F(st\inv) \big( \alpha_s(a) \big) \big) , \quad s,t \in G,\ a \in A .
\]
Then $F$ is a Herz--Schur $(A,G,\alpha)$-multiplier if and only if $\mathcal{N}(F)$ is a Schur $A$-multiplier, and in this case $\nrm[F]_\HS = \nrm[\mathcal{N}(F)]_\Sch$ \cite{MTT18}*{Theorem 3.8}.

\section{Exactness}
\label{sec:exactness}

\noindent
Recall that a $C^*$-algebra $A$ is called exact if it has a faithful nuclear representation $\pi:A\to \Bd$, \textit{i.e.}\ there exists a net $(k_i)_{i\in I}$ of natural numbers and completely positive contractions $\varphi_i: A\to M_{k_i}(\mathbb C)$ and $\psi_i:M_{k_i}(\mathbb C)\to \Bd$ such that $\|\pi(x)-(\psi_i\circ\varphi_i)(x)\|\to 0$ for every $x\in A$. 
The notion is independent of the representation $\pi$: if $A\subset \Bd$ is a concretely represented exact $C^*$-algebra then the identity representation $\id:A\to \Bd$ is nuclear.  
It follows from Kirchberg's characterisation of exactness that one  can replace the existence of completely contractive positive maps in the definition of exactness by completely bounded maps with uniformly bounded cb norms (see \cite{BrO08}*{Proposition 3.7.8, Theorem 3.9.1}). 
%{\bf Arguments}: If we assume the existence of such cb maps, following the arguments of \cite{BrO08}*{Proposition 3.7.8} one can prove that for all $B$ and all ideals $J$ of $B$ the sequence
%\[
%	0\rightarrow J\otimes A\rightarrow B\otimes A\rightarrow (B/J)\otimes A\rightarrow 0
%\]
%is exact, uniform complete boundedness of $\psi_i$, $\varphi_i$ is easily seen to be enough. By Kirchberg's theorem \cite{BrO08}*{Theorem 3.9.1} we obtain that $A$ is exact. 

Recall also that a bounded linear map $\phi : A \to B$ between two $C^*$-algebras is called decomposable if there exist two completely positive maps $\psi_i : A \to B$ ($i=1,2$) such that the map $\tilde{\phi} : A\to M_2(B)$ defined by
\[
	\tilde{\phi}(a) := \begin{pmatrix} \psi_1(a) & \phi(a) \\ \phi(a^*)^* & \psi_2(a) \end{pmatrix}
\]
is completely positive. In this case one defines 
\[
	\|\phi\|_\dec := \inf\{\max\{\|\psi_1\|, \|\psi_2\|\}\},
\] 
where the infimum is taken over all $\psi_1$, $\psi_2$ as above. 
We have $\|\phi\|_\cb\leq \|\phi\|_\dec$, and if $B$ is injective $\|\phi\|_\dec= \|\phi\|_\cb$ \cite{EfRu00}*{Lemma 5.4.3}.

The following result is inspired by Dong--Ruan~\cite{DR12}*{Theorem 6.1}. 
We note that a different definition of exactness of $C^*$-dynamical systems has been given by Sierakowski~\cite{Sie10} to study the ideal structure of crossed products; this definition was also studied by B\'{e}dos--Conti~\cite{BC15}*{page 63}, where Corollary~\ref{co:amenableaction} below was established with a different proof. 

\begin{definition}\label{de:exactsystem} Let $A\subset \Bd$. 
We say a $C^*$-dynamical system $(A,G,\alpha)$ is \emph{exact} if there exists a net $(\varphi_i)_i$ of Schur $A$-multipliers such that
\begin{enumerate}[i.]
    \item $\sup_i \nrm[\varphi_i]_\Sch < \infty$;
    \item for each $i$ there exists a finite set $F_i \subseteq G$ such that $\varphi_i$ is supported on $\Delta_{F_i} \defeq \{ (s,t) : st\inv \in F_i \}$;
    \item for all $a \in A$ we have $\nrm[\varphi_i(s,t)(\alpha_s\inv(a)) - \alpha_s\inv(a)] \stackrel{i}{\to} 0$ uniformly on $\Delta_R$ for each finite $R \subseteq G$;
    \item for each $i$ and each $r \in G$ the space $\{ t \mapsto (\varphi_i(t,r\inv t)(\alpha_t\inv(a))) : a \in A \}$ is a finite-dimensional subspace of $\ell^\infty(G , \Bd)$. %(originally stated as: for each $i$ the span of the $r$th diagonal matrices $\lspan \{ \diag (\varphi_i(t,r\inv t)(\alpha_t\inv(a))) : a \in A \}$ is finite-dimensional.)
\end{enumerate}
\end{definition}

\begin{remark}\label{re:exactSchmultspos}
{\rm
The proof of Theorem~\ref{th:exactcrossediffexactsystem} below shows that the Schur $A$-multipliers in Definition~\ref{de:exactsystem} may be chosen to be positive.

Note also that condition (iv) above implies $\varphi_i(s,t)$ is a finite rank map on $A$ for all $s,t \in G,\ i \in I$, but the latter is not equivalent to (iv).
%Indeed, \cite{MSTT18}*{Theorem 2.6} and (\ref{eq:defisaposSch}) show that the Schur multipliers defined in the proof of (ii)$\implies$(i) above are of positive type.
}	
\end{remark}

\begin{theorem}\label{th:exactcrossediffexactsystem}
Let $(A,G,\alpha)$ be a $C^*$-dynamical system, with $G$ a discrete group. 
The following are equivalent:
\begin{enumerate}[i.]
    \item $(A,G,\alpha)$ is exact;
    \item $\rcrs$ is exact.
\end{enumerate}
\end{theorem}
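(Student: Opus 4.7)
The plan is to match Definition~\ref{de:exactsystem} with Kirchberg's characterization of exactness (\cite{BrO08}*{Proposition 3.7.8, Theorem 3.9.1}): a $C^*$-algebra $B \subset \Bd[\mathcal{K}]$ is exact precisely when the inclusion $B \hookrightarrow \Bd[\mathcal{K}]$ is a point-norm limit of cb finite-rank maps with uniformly bounded cb norms. Both implications will be routed through this characterization.

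For (i) $\Rightarrow$ (ii), I extend each $S_{\varphi_i}$ to $\Bd[\ell^2(G)] \otimes A$ as described in Section~\ref{ssec:multipliers} and restrict it to $\rcrs \subset \Bd[\ell^2(G, \HH)]$. On a Fourier sum $x = \sum_{r \in R} \pi(a_r)\lambda_r$, the matrix of $x$ has entry $\alpha_{s\inv}(a_r)$ at position $(s, r\inv s)$. The support condition (ii) confines $S_{\varphi_i}(x)$ to the diagonals indexed by $r \in F_i$; on each such diagonal, condition (iv) forces the entries of $S_{\varphi_i}(x)$ to lie in a fixed finite-dimensional subspace of $\ell^\infty(G, \Bd)$. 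Hence $S_{\varphi_i}|_{\rcrs}$ is a cb finite-rank map, with cb norm equal to $\|\varphi_i\|_\Sch$ and therefore uniformly bounded by (i). Condition (iii) then delivers point-norm convergence $S_{\varphi_i}(x) \to x$ on Fourier sums, which extends to all of $\rcrs$ by the uniform cb bound. Kirchberg's theorem now yields exactness of $\rcrs$.

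For (ii) $\Rightarrow$ (i), Kirchberg provides a net of cb finite-rank maps $\tau_i : \rcrs \to \Bd[\ell^2(G, \HH)]$ with $\|\tau_i\|_\cb \leq K$ and $\tau_i \to \id$ point-norm. I propose to read off a Schur $A$-multiplier from each $\tau_i$ via the formula
\[
    \varphi_i(s,t)(a) \defeq \bigl[\tau_i\bigl(\pi(\alpha_s(a))\lambda_{st\inv}\bigr)\bigr]_{s,t},
\]
where $[\cdot]_{s,t}$ denotes the $(s,t)$-matrix entry on $\ell^2(G, \HH)$. Convergence of $\tau_i$ on the generators $\pi(a)\lambda_r$ delivers (iii), since matrix entries converge uniformly whenever operators converge in norm; the finite rank of $\tau_i$ yields (iv) by injecting the ``column'' map $a \mapsto [t \mapsto \varphi_i(t, r\inv t)(\alpha_{t\inv}(a))]$ into the finite-dimensional range of $\tau_i$. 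To obtain the finite-support condition (ii), I pre-truncate $\tau_i$ in the Fourier direction via a finitely supported convolver on $G$, arranging that matrix entries vanish on diagonals outside some finite set $F_i$ while preserving the cb norm up to a constant.

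The main obstacle is verifying that the constructed $\varphi_i$ is genuinely a Schur $A$-multiplier with $\|\varphi_i\|_\Sch$ uniformly bounded. The Schur multiplier norm concerns cb behaviour on the strictly larger algebra $\Cpt[\ell^2(G)] \otimes A$, whereas the Kirchberg data only controls cb behaviour on $\rcrs$. Bridging this gap will likely require a Stinespring-type dilation argument in the spirit of Theorem~\ref{schura}, or a passage through CP approximations as anticipated in Remark~\ref{re:exactSchmultspos}. This step is the closest analogue of the De Cannière--Haagerup trick that converts arbitrary cb maps on $\rgCst$ into multipliers in the classical group setting, and the finite-support truncation must be chosen compatibly with whatever dilation produces the Schur $A$-multiplier norm bound.
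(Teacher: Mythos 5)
Your direction (i) $\Rightarrow$ (ii) is essentially the paper's argument. The one step you fold into ``Kirchberg's characterization'' without comment is that a finite-rank completely bounded map into $\Bd[\ell^2(G) \otimes \HH]$ must still be factored through a matrix algebra before the definition of exactness recorded at the start of Section~\ref{sec:exactness} applies; the paper does this explicitly via $\nrm[\Phi_i]_\dec = \nrm[\Phi_i]_\cb$ (injectivity of the target) and Pisier's factorisation theorem for finite-rank decomposable maps, obtaining $\Phi_i = \mu_i \circ \nu_i$ through $M_n$ with $\nrm[\nu_i]_\cb \nrm[\mu_i]_\cb \leq \nrm[\Phi_i]_\dec$. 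That is a genuine, if standard, step rather than part of the statement of Kirchberg's theorem.

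The gap is in (ii) $\Rightarrow$ (i), and you have located it yourself without closing it: nothing in the proposal shows that $\varphi_i(s,t)(a) = [\tau_i(\pi(\alpha_s(a))\lambda_{st\inv})]_{s,t}$ is a Schur $A$-multiplier, let alone one with uniformly bounded $\nrm_\Sch$, and the ``finitely supported convolver'' truncation is also unavailable in general: cutting down to the diagonals indexed by a finite set $F_i$ is Schur multiplication by $\chi_{\Delta_{F_i}}$, whose multiplier norm is only controlled by $|F_i|$ (or by weak amenability of $G$, which is not assumed), so it destroys the uniform bound as $F_i$ exhausts $G$. The paper resolves both problems at once by not starting from arbitrary cb finite-rank approximants: it invokes Ozawa's Lemma~2 of \cite{Oza00}, which upgrades exactness of $\rcrs$ to finite-rank \emph{unital completely positive} approximants of the special form $\theta(x) = \sum_{k=1}^d \omega_k(x) y_k$ with each $\omega_k$ a vector functional at basis vectors $\delta_{p(k)} \otimes \xi_k$, $\delta_{q(k)} \otimes \eta_k$. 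Complete positivity gives a Stinespring dilation $\theta = V^* \rho(\cdot) V$ with $\nrm[V]^2 = \nrm[\theta(1)] = 1$, so that
\[
	\varphi(s,t)(a) = P_s \theta(\lambda_s \pi(a) \lambda_t^*) P_t^* = \big( P_s V^* \rho(\lambda_s) \big) \, \rho(\pi(a)) \, \big( \rho(\lambda_t)^* V P_t^* \big)
\]
is exactly the dilation form of Theorem~\ref{schura}, hence a positive Schur $A$-multiplier of norm at most $1$; and the vector-functional structure forces $\varphi(s,t)(a) = 0$ unless $st\inv \in \{ q(k) p(k)\inv : k = 1, \ldots, d \}$, so the finite-support condition comes for free, with no truncation step at all. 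Without Ozawa's lemma, or some equivalent device producing completely positive and automatically finitely supported approximants, your construction does not close.
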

\begin{proof}
(i) $\implies$ (ii) Let $S_i$ be the completely bounded map from $\Bd[\ell^2(G)] \otimes A$ to $\Bd[\ell^2(G)] \btens \Bd$ associated to $\varphi_i$.  We observe that 
$\rcrs\subset \Bd[\ell^2(G)] \otimes A$
%so that
%\[
  %  S_i^{**} : ( \Cpt[\ell^2(G)] \otimes A)^{**} \to \Bd[\ell^2(G)] \btens \Bd^{**} .
%\]
%Writing $E$ for the conditional expectation from $\Bd[\ell^2(G)] \btens \Bd^{**}$ to $\Bd[\ell^2(G)] \btens \Bd$ we obtain a completely bounded map
%\[
  %  \Phi_i \defeq E \circ S_i^{**} : ( \Cpt[\ell^2(G)] \otimes A)^{**} \to \Bd[\ell^2(G) \otimes \HH] ;
%\]
and write $\Phi_i$ for the restriction of this map to $\rcrs$.

Each $\Phi_i$ is a finite rank map since it is supported on finitely many diagonals, and the image of each diagonal is finite-dimensional; here we think of operators $T$ on $\ell^2(G)\otimes\HH$ as block operator matrices $(a_{p,q})_{p,q\in G}$ where $\langle a_{p,q} \xi,\eta\rangle=\langle T\delta_q\otimes\xi,\delta_p\otimes\eta\rangle$.

Now, since $\Bd[\ell^2(G) \otimes \HH]$ is an injective $C^*$-algebra we have %\cite{Pis03}*{page 230}
\[
    \nrm[\Phi_i]_\dec = \nrm[\Phi_i]_\cb \leq \nrm[\varphi_i]_\Sch < C 
\]
for some $C>0$. 
Since $\Phi_i$ is finite rank by \cite{Pis03}*{Theorem 12.7} there exist a natural number $n$ and completely bounded maps $\nu_i : \rcrs \to M_n$ and $\mu_i : M_n \to \Bd[\ell^2(G) \otimes \HH]$ such that $\Phi_i = \mu_i \circ \nu_i$ and
\[
    \nrm[\nu_i]_\cb \nrm[\mu_i]_\cb \leq \nrm[\Phi_i]_\dec \leq C .
\]

Since for $\xi$, $\eta\in\HH$, 
\[
    \ip{(\pi(a) \lambda_r)_{p,q} \xi}{\eta} = \ip{\pi(a) \lambda_r (\delta_q \otimes \xi)}{\delta_p \otimes \eta} =
        \begin{cases}
            0 & \text{if $p \neq rq$} \\
            \ip{\alpha_p\inv(a) \xi}{\eta} & \text{if $p = rq$,}
        \end{cases}
\]
we have
\[
\begin{split}
    \ip{\Phi_i(\pi(a) \lambda_r) (\delta_q \otimes \xi)}{\delta_p \otimes \eta} &= \ip{\varphi_i(p,q) \left( (\pi(a)\lambda_r)_{p,q} \right) \xi}{\eta} \\
        &= \begin{cases}
             0 & \text{if $p \neq rq$} \\
             \ip{\varphi_i(p,q)(\alpha_p\inv(a)) \xi}{\eta} & \text{if $p = rq$,}
           \end{cases}
\end{split}
\]
so conditions (i) and (iii) imply that $\Phi_i(\pi(a) \lambda_r)$ converges to $\pi(a) \lambda_r$ in norm. Since sums of elements of this form are dense in $\rcrs$ and the $S_i$, therefore the $\Phi_i$, are uniformly bounded it follows that $\nrm[\Phi_i(x) - x] \to 0$ for all $x \in \rcrs$.
This implies that $\rcrs$ is exact by the discussion at the beginning of this section. 

(ii) $\implies$ (i) By Ozawa \cite{Oza00}*{Lemma 2}, given a finite subset $B$ of $\rcrs$ and $\epsilon >0$ we may find $\theta : \rcrs \to \Bd[\ell^2(G) \otimes \HH]$ which is finite rank unital completely positive \ with $\nrm[\theta(x) - x] < \epsilon$ for all $x \in B$ of the form
\[
    \theta(x) = \sum_{k = 1}^d \omega_k(x) y_k , \quad x \in \rcrs ,
\]
where $y_k \in \Bd[\ell^2(G) \otimes \HH]$ and each $\omega_k$ is a vector functional of the form
\[
    \omega_k(y) = \ip{y (\delta_{p(k)} \otimes \xi_k)}{\delta_{q(k)} \otimes \eta_k} , \quad p(k) ,q(k) \in G ,\ \xi_k,\eta_k \in \HH .
\]
Now given such a $B$ and $\epsilon$ we define, for $s,t \in G ,\ a \in A$,
\[
    \varphi : G \times G \to \CB[A , \Bd] ;\ \varphi(s,t)(a) \defeq P_s \theta ( \pi(\alpha_s(a)) \lambda_{st\inv} ) P_t^* , 
\]
where $P_r : \ell^2(G) \otimes \HH \to \HH ;\ \sum_{s \in G} \delta_s \otimes \xi_s \mapsto \xi_r$.
To show that $\varphi$ is a Schur $A$-multiplier calculate:
\begin{equation}\label{eq:defisaposSch}
\begin{split}
    \varphi(s,t)(a) = P_s \theta ( \pi(\alpha_s(a)) \lambda_{st\inv} ) P_t^* &= P_s \theta (\lambda_s \pi(a) \lambda_t^*) P_t^* \\
        &= P_s V^* \rho (\lambda_s \pi(a) \lambda_t^*) V P_t^* \\
        &= ( P_s V^* \rho(\lambda_s) ) \rho(\pi(a)) ( \rho(\lambda_t)^* V P_t^* ) ,
\end{split}
\end{equation}
where $V^* \rho ( \cdot ) V$ is the Stinespring form of the completely positive map $\theta$.
It follows from \cite{MSTT18}*{Theorem 2.6} that $\varphi$ is a positive Schur $A$-multiplier.
Since 
\begin{equation}\label{V}
	\nrm[V]^2 = \nrm[\theta(1)] =1
\end{equation} 
it also follows that we have defined a net of positive Schur $A$-multipliers with uniformly bounded multiplier norm.

For the support condition observe that $\omega_k(\pi(a) \lambda_r) \neq 0$ only if $\delta_{q(k)} = \lambda_r \delta_{p(k)} = \delta_{r p(k)}$ for some $k$. Hence $\varphi(s,t)(a) \neq 0$ only if $st\inv \in \{ q(k) p(k)\inv : k = 1, \ldots , d \}.$

Given finite sets $F\subset A$, and $R\subset G$ we can construct $\varphi_{F,R,\epsilon}$ as above so that for all $(s,t)\in\Delta_R$ and $a\in F$
\[
\begin{split}
    \nrm[\varphi_{F,R,\epsilon}(s,t)(\alpha_s^{-1}(a)) - \alpha_s^{-1}(a)] &= \nrm[P_s \theta ( \pi(a) \lambda_{st\inv} ) P_t^* - P_s  \pi(a) \lambda_{st\inv} P_t^*] \\
        &\leq \nrm[\theta ( \pi(a) \lambda_{st\inv} ) - \pi(a) \lambda_{st\inv}] < \epsilon .
\end{split}
\]
Therefore the net $\{\varphi_{F,R,\epsilon}\}$, where $(F_1,R_1,\epsilon_1)\leq (F_2,R_2,\epsilon_2)$ iff
$F_1\subset F_2$, $R_1\subset R_2$, $\epsilon_2\leq\epsilon_1$, satisfies
\[
	\sup_{(s,t)\in\Delta_{\tilde R}}\nrm[\varphi_{F,R,\epsilon}(s,t)(\alpha_s^{-1}(a)) - \alpha_s^{-1}(a)]\to 0
\]
for any $a\in A$ and any finite $\tilde R\subset G$. 

Finally, the condition on the diagonals is satisfied because $\theta$ is finite rank:
\[
    \varphi_{F,R,\epsilon}(t,r\inv t)(\alpha_t^{-1}(a)) = \sum_{k =1}^d \omega_k \big( \pi(a)\lambda_r \big) P_t y_k P_{r\inv t}^*
    %= \sum_{k =1}^d \omega_k(\pi(\alpha_{rt}\inv(a)) \lambda_r ) P_{rt} y_k P_t ,
\]
so $\{ t \mapsto \varphi_{F,R,\epsilon}(t,r\inv t)(\alpha_t\inv(a)): a \in A \}$ is a finite-dimensional subspace of $\ell^\infty(G , \Bd)$ for each $r\in G$.
%the span mentioned above is contained in the span of the $r$th diagonals of the finitely many operators $y_k$.
\end{proof}

In previous work we observed that Herz--Schur multipliers can be used to prove that an amenable action on a nuclear $C^*$-algebra gives a nuclear crossed product \cite{MSTT18}*{Corollary 4.6}.
In the Corollary below we provide further evidence that Herz--Schur multipliers can be used as a technical basis for approximation results by using  Herz--Schur multipliers to prove the corresponding exactness result \cite{BrO08}*{Theorem 4.3.4 (3)} and its generalisation \cite{BC15}*{Theorem 5.8}. 

We recall from \cite{BC12}*{Definition 5.7} that $(A,G,\alpha)$ is said to have the approximation property, which we call \emph{Exel's approximation property} (\cites{exel, exelng}) if there exist nets $(\xi_i)_i$ and $(\eta_i)_i$ in $C_c(G,A)$ such that 
\begin{enumerate}
    \item $\sup_i\|\sum_{g\in G}\xi_i(g)^*\xi_i(g)\|\sup_i\|\sum_{g\in G}\eta_i(g)^*\eta_i(g)\|<\infty$;
    \item $\lim_i\|\sum_{h\in G}\xi_i(h)^*a\alpha_g(\eta_i(g^{-1}h))-a\|=0$, for all $a\in A$ and $g\in G$.
\end{enumerate}

We remark that if the action $\alpha : G \to \Aut(A)$ is amenable, see \cite{BrO08}*{Definition 4.3.1}, then $(A,G,\alpha)$ has Exel's approximation property with $\xi_i(r)=\eta_i(r)\in Z(A)^+$, for each $r\in G$, and $\sum_{g\in G}\xi_i(g)^2=1$ for all $i$. 
In the following result we give a multiplier-based proof of a part of \cite{BC15}*{Theorem 5.8}.

%Recall that an action $\alpha : G \to \Aut(A)$ is called \emph{amenable} \cite{BrO08}*{Definition 4.3.1} if there exists a net $(T_i)_i$ of finitely supported functions $T_i : G \to Z(A)$ (here $Z(A)$ denotes the centre of $A$) such that $T_i(r) \geq 0$ for all $r \in G$, $\sum_{r \in G} T_i(r)^2 = 1_A$ and
%\[
%	\left\Vert \sum_{s \in G} \Big( T_i(s) - \alpha_t \big( T_i(t\inv s) \big) \Big)^* \Big( T_i(s) - \alpha_t \big( T_i(t\inv s) \big) \Big) \right\Vert \stackrel{i}{\to} 0 .
%\]

\begin{corollary}\label{co:amenableaction}
Let $(A,G,\alpha)$ be a $C^*$-dynamical system  with Exel's approximation property.
The reduced crossed product $\rcrs$ is exact if and only if $A$ is exact.
\end{corollary}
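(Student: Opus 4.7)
Plan: The direction ``$\rcrs$ exact $\Rightarrow A$ exact'' is immediate: $A$ embeds into $\rcrs$ as a $C^*$-subalgebra via the representation $\pi$, and exactness is inherited by $C^*$-subalgebras.

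For the converse, assume $A$ is exact. By Theorem~\ref{th:exactcrossediffexactsystem}, it is enough to verify that $(A,G,\alpha)$ is exact as in Definition~\ref{de:exactsystem}. The plan is to combine Herz--Schur multipliers built from Exel's data with finite-rank cb approximations of $\id_A$ from exactness. The Exel weights $\xi_i,\eta_i\in C_c(G,A)$ give Herz--Schur $(A,G,\alpha)$-multipliers
\[
F_i(g)(a) := \sum_{h\in G}\xi_i(h)^*a\,\alpha_g(\eta_i(g^{-1}h)),
\]
of finite support, with $\sup_i\|F_i\|_\HS<\infty$ by Exel~(1) and $F_i(g)(a)\to a$ pointwise by Exel~(2); via Theorem~\ref{schura}, the Schur $A$-multipliers $\mathcal N(F_i)(s,t)(a)=V_i(s)^*\pi(a)W_i(t)$ with $V_i(s,k)=\alpha_{s^{-1}}(\xi_i(sk))$, $W_i(t,k)=\alpha_{t^{-1}}(\eta_i(tk))$, satisfy conditions~(i),~(ii),~(iii) of Definition~\ref{de:exactsystem} but fail~(iv) since the range of $F_i(g)$ is infinite-dimensional. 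By Kirchberg's characterization of exactness (and a standard argument to arrange finite-rank cb approximations landing in $A$), take maps $\theta_j:A\to A$ of finite rank, with $\sup_j\|\theta_j\|_\cb<\infty$ and $\theta_j(a)\to a$ pointwise, and form $F_{i,j}(g):=F_i(g)\circ\theta_j$, which has range in the finite-dimensional subspace $F_i(g)(\theta_j(A))\subseteq A$.

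The resulting candidates $\varphi_{i,j}:=\mathcal N(F_{i,j})$ satisfy~(iv) via the identity
\[
\varphi_{i,j}(t,r^{-1}t)(\alpha_{t^{-1}}(a)) = \alpha_{t^{-1}}\bigl(F_i(r)(\theta_j(a))\bigr),
\]
which lies in the finite-dimensional space $\{t\mapsto\alpha_{t^{-1}}(b):b\in F_i(r)(\theta_j(A))\}\subseteq\ell^\infty(G,\Bd)$. Condition~(iii) reduces to $\|F_i(st^{-1})(\theta_j(a))-a\|\to 0$ uniformly on $\Delta_R$, which by the triangle inequality is bounded by $\|F_i(st^{-1})\|_\cb\|\theta_j(a)-a\|+\|F_i(st^{-1})(a)-a\|$; a diagonal selection over the product net $(i,j)$ delivers this. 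Conditions~(i) and~(ii) follow from the product of the Exel and exactness bounds and the unchanged finite supports.

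The main obstacle is to verify that $F_{i,j}$ is itself a Herz--Schur multiplier with uniformly bounded norm, i.e., that the explicit formula
\[
\mathcal N(F_{i,j})(s,t)(a) = \sum_k V_i(s,k)^*\,\alpha_{s^{-1}}(\theta_j(\alpha_s(a)))\,W_i(t,k)
\]
defines a Schur $A$-multiplier as in Theorem~\ref{schura}. The inner $s$-conjugated map $\alpha_{s^{-1}}\circ\theta_j\circ\alpha_s$ prevents an immediate Stinespring factorisation through a single $*$-representation. To resolve this, I would take a Stinespring dilation $\theta_j(b)=X_j^*\sigma_j(b)Y_j$ and enlarge $\sigma_j$ to the covariant representation of $A$ on $\HH_j\otimes\ell^2(G)$ in which $\alpha$ is implemented by the unitary left regular action; the $s$-conjugation then becomes conjugation by a fixed family of unitaries, which can be absorbed into the outer dilation operators to produce $\mathcal N(F_{i,j})(s,t)(a)=\tilde V_{i,j}(s)^*\tilde\sigma_j(a)\tilde W_{i,j}(t)$ with $\sup_s\|\tilde V_{i,j}(s)\|\cdot\sup_t\|\tilde W_{i,j}(t)\|$ controlled by the product of the Exel and exactness constants. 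With (i)--(iv) in hand, $(A,G,\alpha)$ is exact, and Theorem~\ref{th:exactcrossediffexactsystem} yields exactness of $\rcrs$.
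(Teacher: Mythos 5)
Your easy direction and your overall strategy (verify Definition~\ref{de:exactsystem} for a doubly-indexed net and invoke Theorem~\ref{th:exactcrossediffexactsystem}) match the paper, and your checks of conditions (ii)--(iv) for the candidates $\mathcal N\big(F_i(\cdot)\circ\theta_j\big)$ are essentially correct. The gap is precisely at the point you flag as the main obstacle, condition (i), and the dilation trick you propose does not close it. Write $\theta_j(b)=X_j^*\sigma_j(b)Y_j$ and pass to any covariant enlargement $\tilde\sigma_j$ of $\sigma_j$ on $\HH_j\otimes\ell^2(G)$ in which $\alpha$ is unitarily implemented: then $\alpha_s\inv\big(\theta_j(\alpha_s(a))\big)=C(s)^*\,\tilde\sigma_j(a)\,D(s)$ with \emph{both} legs $C(s)=Q(s)X_ju_s$ and $D(s)=Q(s)Y_ju_s$ depending on $s$ (the unitaries are absorbed, but into $s$-dependent operators on both sides). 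Since this block sits in the middle of $\mathcal N(F_{i,j})(s,t)(a)=\sum_p\alpha_s\inv(\xi_i(sp)^*)\,C(s)^*\,\tilde\sigma_j(a)\,D(s)\,\alpha_t\inv(\eta_i(tp))$, the right-hand factor $D(s)\alpha_t\inv(\eta_i(tp))$ depends on $s$ and cannot be packaged into a function $W(t)$ of $t$ alone, so Theorem~\ref{schura} yields no uniform bound. The only bound available for $F_i(\cdot)\circ\theta_j$ is the crude finite-support estimate $\|F_{i,j}\|_\HS\leq|\supp F_i|\,\sup_g\|F_i(g)\|_\cb\,\|\theta_j\|_\cb$, which blows up with $i$. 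Equivalently, your $\varphi_{i,j}$ is the composition of $\mathcal N(F_i)$ with the ``constant'' multiplier $(s,t)\mapsto\alpha_s\inv\circ\theta_j\circ\alpha_s$, i.e.\ the Schur multiplier of the entrywise map $\pi(a)\lambda_r\mapsto\pi(\theta_j(a))\lambda_r$, and for a non-equivariant finite-rank $\theta_j$ this is not uniformly completely bounded.

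The missing idea --- and this is what the paper does --- is to twist the finite-rank approximant by the \emph{summation} index rather than the row index: with $u_p$ unitaries implementing $\alpha$ and $\Phi_j:A\to\Bd$ the exactness approximants, set
\[
\varphi_{i,j}(s,t)(a)\defeq\sum_{p\in G}\alpha_s\inv\big(\xi_i(sp)^*\big)\,u_p\Phi_j\big(\alpha_p\inv(a)\big)u_p^*\,\alpha_t\inv\big(\eta_i(tp)\big).
\]
Here the conjugating variable $p$ is shared by the two Exel legs, so after dilating $\Phi_j$ the $p$-dependence distributes as $V(s)=\sum_p\delta_p\otimes(\cdots)_{s,p}$ and $W(t)=\sum_p\delta_p\otimes(\cdots)_{t,p}$ through the $\ell^2(G)$ tensor leg, and Exel's condition (1) together with $\sup_j\|\Phi_j\|_\cb<\infty$ gives the uniform Schur-norm bound (this is the computation of \cite{MSTT18}*{Corollary~4.6}). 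With this replacement the remaining verifications --- support on $\Delta_{F_iG_i\inv}$, finite-dimensional diagonal spaces, and convergence via a diagonal choice $j(i)$ with the error split into a $\|\Phi_j(\cdot)-\cdot\|$ term over a finite set and an Exel condition (2) term --- go through much as you describe.
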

\begin{proof}
If $\rcrs$ is exact then the existence of a conditional expectation $\rcrs \to A$ immediately implies $A$ is exact.
Conversely, assume $A$ is exact, with $\Phi_j : A \to \Bd$ an approximating net of u.c.p.\ maps, and $\xi_i, \eta_i : G \to A$ nets implementing Exel's  approximation property.
We may assume that $A$ is faithfully embedded in $\HH$ such that the action is implemented by unitary operators, so for each $p \in G$ let $u_p$ be a unitary on $\HH$ such that $u_p a u_p^* = \alpha_p(a)$ ($a \in A$).  
Define
\[
    \varphi_{i,j}(s,t)(a) \defeq \sum_{p \in G} \alpha_s\inv \big( \xi_i(sp)^* \big) u_p \left( \Phi_j \big(\alpha_p\inv(a) \big) \right) u_p^* \alpha_t\inv \big( \eta_i(tp) \big) ,
\]
which gives a net of Schur $A$-multipliers with uniformly bounded multiplier norm as in \cite{MSTT18}*{Corollary 4.6}.

Since the $r$th term of the sum is non-zero only if $r \in s\inv F_i \cap t\inv G_i$ we have that $\varphi_{i,j}$ is supported on the strip $\Delta_{\mathcal{F}_i} \defeq \{ (s,t) : st\inv \in F_i G_i\inv \}$, where $F_i$ and $G_i$ are  finite supports  of $\xi_i$ and $\eta_i$ respectively. %$r \in F_i$ satisfies $r = st\inv r_0$ for some $r_0 \in F_i$, so $st\inv = r r_0\inv \in F_i F_i\inv$
As $\Phi_j$ is finite rank it follows that $\varphi_{i,j}(s,t)$ is a finite rank map from $A$ to $\Bd$ for all $s,t$.

For each $t \in G$ and each $i,j$ the subspace of $\ell^\infty(G,\Bd)$ given by
\[
\begin{split}
    & \left\{ \big( \varphi_{i,j}(s,t\inv s) ( \alpha_s\inv(a)) \big)_s : a \in A \right\} \\
        &\quad = \left\{ \left( \sum_{p \in G} \alpha_s\inv \big( \xi_i(sp)^* \big) u_p \Phi_j \big(\alpha_{sp}\inv (a) \big) u_p^* \alpha_{s\inv t} \big( \eta_i(t\inv sp) \big) \right)_s : a \in A \right\} \\
        &\quad = \left\{ \left( \sum_{r \in G} \alpha_s\inv \big( \xi_i(r)^* \big) u_{s\inv r} \Phi_j \big(\alpha_{r}\inv (a) \big) u_{s\inv r}^* \alpha_{s\inv t} \big( \eta_i(t\inv r) \big) \right)_s : a \in A \right\}
\end{split}
\]
is finite-dimensional since $\Phi_j$ is finite rank and $\xi_i$, $\eta_i$  have finite supports.
Finally
\[
\begin{split}
    &\left\| \varphi_{i,j}(s,t) \big( \alpha_s\inv(a) \big) - \alpha_s\inv(a) \right\| \\
        &\qquad = \left\| \sum_{p \in G} \alpha_s\inv \big( \xi_i(sp)^* \big) u_p \Phi_j \big( \alpha_{sp}\inv(a) \big) u_p^* \alpha_t\inv \big(\eta_i(tp) \big) - \alpha_s\inv(a) \right\| \\
        &\qquad \leq \left\| \sum_{p \in G} \alpha_s\inv \big( \xi_i(sp)^* \big) u_p \Phi_j \big( \alpha_{sp}\inv(a) \big) u_p^* \alpha_t\inv \big( \eta_i(tp) \big) \right. \\ 
        &\qquad \qquad \qquad - \left. \sum_{p \in G} \alpha_s\inv \big( \xi_i(sp)^* \big) u_p \alpha_{sp}\inv(a) u_p^* \alpha_t\inv \big( \eta_i(tp) \big) \right\| \\
            &\qquad \qquad \qquad + \left\| \sum_{p \in G} \alpha_s\inv \big( \xi_i(sp)^* \big) \alpha_s\inv(a) \alpha_t\inv \big( \eta_i(tp) \big) - \alpha_s\inv(a) \right\| \\
        &\qquad \leq \left\| \sum_{p \in G} \alpha_s\inv \big( \xi_i(sp)^* \big) u_p \Big( \Phi_j \big( \alpha_{sp}\inv(a) \big) - \alpha_{sp}\inv(a) \Big) u_p^* \alpha_t\inv \big( \xi_i(tp) \big) \right\| \\
        &\qquad \qquad \qquad + \left\| \alpha_s\inv\big(\sum_{p \in G}  \xi_i(sp)^* a \alpha_{st\inv} \big( \eta_i(tp) \big) - a\big) \right\|  \\
        \end{split}\]
            \[
            \begin{split}
        & \qquad = \left\| \sum_{p \in G} \alpha_s\inv \big( \xi_i(sp)^* \big) u_p \Big( \Phi_j \big( \alpha_{sp}\inv(a) \big)- \alpha_{sp}\inv(a) \Big) u_p^* \alpha_t\inv \big( \eta_i(tp) \big) \right\| \\
        %&\qquad \leq \sum_{p \in G} \nrm[T_i(sp)] \nrm[T_i(tp)] \nrm[\Phi_j(\alpha_{sp}\inv(a)) - \alpha_{sp}\inv(a)] \\ 
        &\qquad \qquad \qquad + \left\| \sum_{p \in G} \xi_i(p)^*a \alpha_{st\inv } \big( \eta_i(ts\inv p) \big) - a\right\|  .
\end{split}
\]
The second term converges to 0 uniformly on $\Delta_R$ for any finite $R$ by the second condition for the  approximation property. 
If $st\inv$ belongs to the finite set $R$ the first term is non-zero only if $sp \in F_i \cap st\inv G_i \subseteq F_i \cap R G_i$, which is a finite set.
Given $\epsilon >0$ and $i$ one can find $j(i)$ such that $\nrm[\Phi_{j(i)}(\alpha_{sp}\inv(a)) - \alpha_{sp}\inv(a)] < \epsilon$ for all $sp \in F_i \cap R G_i$.
For $\xi$, $\eta\in \HH$ we have
\[
\begin{split}
	& \left| \ip{\sum_{p \in G} \alpha_s\inv (\xi_i(sp)^*) u_p (\Phi_{j(i)}(\alpha_{sp}\inv(a))- \alpha_{sp}\inv(a))u_p^* \alpha_t\inv(\eta_i(tp))\xi}{\eta} \right| \\
		&\leq \sum_{p \in G} \left\| u_p \big( \Phi_{j(i)} \big( \alpha_{sp}\inv(a) \big)- \alpha_{sp}\inv(a) \big) u_p^* \alpha_t\inv \big( \eta_i(tp) \big) \xi \right\| \left\| \alpha_s\inv \big( \xi_i(sp) \big) \eta \right\| \\
		&\leq \epsilon \sum_{p\in G} \left\| \alpha_t\inv \big( \eta_i(tp) \big) \xi \right\| \left\| \alpha_s\inv \big( \xi_i(sp) \big) \eta \right\| \\
		&\leq \epsilon \left( \sum_{p\in G} \left\| \alpha_t\inv \big( \eta_i(tp) \big) \xi \right\|^2 \right)^{1/2} \left( \sum_{p\in G} \left\| \alpha_s\inv \big( \xi_i(sp) \big) \eta \right\|^2 \right)^{1/2} \\
		&\leq \epsilon \left( \ip{\sum_{p\in G} \alpha_t\inv \big( \eta_i(tp)^*\eta_i(tp) \big) \xi}{\xi} \right)^{1/2} \left( \ip{\sum_{p\in G} \alpha_s\inv \big( \xi_i(sp)^*\xi_i(sp) \big)\eta}{\eta} \right)^{1/2} 
\end{split}
\]
By the first condition for the approximation property, it follows that 
\[
	\left\Vert \sum_{p \in G} \alpha_s\inv \big( \xi_i(sp)^* \big) u_p \big( \Phi_{j(i)}(\alpha_{sp}\inv(a) ) - \alpha_{sp}\inv(a) \big)u_p^* \alpha_t\inv \big( \eta_i(tp) \big) \right\Vert \leq C\epsilon.
\]
for some constant $C$.
%\[
%\sum_{p \in G} \nrm[T_i(sp)] \nrm[T_i(tp)] \nrm[\Phi_{j(i)}(\alpha_{sp}\inv(a)) - \alpha_{sp}\inv(a)]<\epsilon.
%\]
Hence $\nrm[\varphi_{i,j(i)}(s,t)(\alpha_s\inv(a)) - \alpha_s\inv(a)] \to 0$ uniformly on $\Delta_R$ for all $a \in A$.
\end{proof}

%\marginpar{\tiny Do we need this remark now? We removed our comment about how this could be seen through multipliers? A. Shall we put back that multipliers can be used?}
%By \cite{BrO08}*{Theorem 10.2.9} if $G$ and $A$ are both exact then so is $\rcrs$, and since exactness passes to subalgebras the converse also holds. 
%Therefore the system $(A,G,\alpha)$ is exact in the sense defined above if and only if $G$ is exact and $A$ is exact.
%We observe that the multipliers in Theorem~\ref{th:exactcrossediffexactsystem} give maps implementing exactness of $A$ and $\rgCst$: if $\varphi_i$ satisfy Definition~\ref{de:exactsystem} then $\varphi_i(e,e) : A \to \Bd[\HH]$ implement exactness of $A$ and, for any unit vector $\xi \in \HH$, the classical Schur multipliers $(s,t) \mapsto \ip{\varphi_i(s,t)(\unA) \xi}{\xi}$ implement exactness of $G$. 

\begin{remark}\label{re:Ozawaexactness}
{\rm 
Recall that a discrete group $G$ is called \emph{exact} if $\rgCst$ is an exact $C^*$-algebra.
Recall also Ozawa's characterisation of exact discrete groups~\cite{Oza00}: $G$ is exact if and only if the uniform Roe algebra $C^*_u(G) = \rcros{\ell^\infty(G)}{G}{\beta}$ is nuclear ($\beta$ denotes the translation action). 
We remark that this result can be seen through approximations of Herz--Schur multipliers of the dynamical system  $(\ell^\infty(G),G,\beta)$. 
%By \cite{anantharaman-delaroche} the latter is equivalent that  $G$ acts amenably on a compact Hausdorff space or that $G$ is amenable at infinity. 
In fact, if $\rgCst$ is an exact $C^*$-algebra, then by \cite{Oza00}*{Theorem 3} or by Theorem~\ref{th:exactcrossediffexactsystem} and Remark~\ref{re:exactSchmultspos} there exists a sequence $(\varphi_k)_k$ of positive (classical) Schur multipliers which are supported on strips and converge to 1 uniformly on strips.  
The positivity of $\varphi_k$ allows to find a net of functions $\xi_k:G\to\ell^\infty(G)^+$ that implements the amenability of the action $\beta$ of $G$ on $\ell^\infty(G)$, so that functions $\tilde\varphi_k$ determined by 
\[
    \tilde \varphi_k(s,r^{-1}s)=\sum_{p\in G}\xi_k(p)(s)\xi_k(r^{-1}p)(r^{-1}s)
\]
give a net with the same properties, see \textit{e.g.}\ \cite{AD02}*{Propositions 2.5, 3.5} or \cite{BrO08}*{Theorem 4.4.3}. %also Lemma 2.4 of AD02
It is known that $\ell^\infty(G)$ is nuclear (see \cite{BrO08}*{Proposition 2.4.2}).
Let $(\Phi_i)_{i\in I}$ be the approximating net of unital completely positive maps for $\ell^\infty(G)$ and define	$F_{k,j} : G \to \CB[\ell^\infty(G)]$ by 
\[
	 F_{k,j}(t)(a) \defeq \sum_{p\in G} \xi_k(p) \beta_p \big( \Phi_j(\beta_{p}^{-1}(a))\beta_t(\xi_k(t^{-1}p) \big) , \quad t \in G, \ a\in\ell^\infty(G) .
\] 
The arguments in \cite{MSTT18}*{Corollary 4.6} give that a subnet of  $F_{k,j}$, which we denote in the same way,  yields  Herz--Schur $(\ell^\infty(G) ,G,\beta)$-multipliers that satisfy \cite{MSTT18}*{Definition 4.1} and therefore implement nuclearity of $\rcros{\ell^\infty(G)}{G}{\beta}$.  
The other implication follows from the fact that any $C^*$-subalgebra of a nuclear $C^*$-algebra is exact. 
We note that as each $F_{k,j}$ is a finitely supported Herz--Schur $(\ell^\infty(G), G,\beta)$-multiplier of positive type (see \cite{MSTT18}), we have $F_{k,j}\in P(\ell^\infty(G),G,\beta)$, the positive definite elements of the Fourier--Stieltjes algebra of $(\ell^\infty(G),G,\beta)$ as defined in \cite{BC16}, see discussion after \cite{MSTT18}*{Remarks 2.10} and \cite{BC16}*{Theorem 4.5}. 
We obtain the following characterisation of exactness of $G$: $G$ is exact if and only if there exists a bounded net ${F_i}\in P(\ell^\infty(G),G,\beta)$ of finite support such that each $F_i(s)$ is a finite rank map and $\|F_i(s)(a)-a\|\to 0$ for any $s\in G$ and $a\in\ell^\infty(G)$.
}
\end{remark}

%There seems to be no hope of extending the result above to the $A$-valued case, since our characterisation of exactness of $(A,G,\alpha)$ implies $A$ is exact, while nuclearity of $(\ell^\infty(G) \otimes A,G,\beta \otimes \alpha)$ implies $A$ is nuclear.

One can view a discrete group as a coarse space, in the sense of coarse geometry --- see Roe~\cite{Roe03}. 
An important and well-studied property of coarse spaces is Yu's \emph{property (A)}, which may be viewed as the analogue for coarse spaces of amenability of discrete groups; we refer to the survey \cite{Wil09} for the background on property (A), including a number of characterisations and the connection to $C^*$-algebra theory. %characterisations Theorem 1.2.4; C* connection Theorem 4.3.9
The conditions: (i) $\rgCst$ is exact, (ii) $\rcros{\ell^\infty(G)}{G}{\beta}$ is nuclear, which appear in Remark~\ref{re:Ozawaexactness}, and (iii) the existence of positive Schur multipliers $\varphi_i$ supported on strips and converging to 1 uniformly on strips, are known to be equivalent to property (A) of $G$, where $G$ is viewed as a coarse space \cite{Wil09}*{Theorem 4.3.9}.
Since $\rgCst$ is a special case of the reduced crossed product we remark that Definition~\ref{de:exactsystem} (with positive Schur $A$-multipliers) may be regarded as a generalisation of property (A) to $C^*$-dynamical systems.

\section{The Operator Approximation Property}
\label{sec:OAP}

\noindent
Recall that a $C^*$-algebra $A$ is said to have the \emph{operator approximation property} (OAP) if there exists a net of finite rank continuous linear maps $(\Phi_i)_i$ on $A$ which converge to the identity in the stable point-norm topology, that is $\Phi_i \otimes \id_{\Cpt[\ell^2]} \to \id_{A \otimes \Cpt[\ell^2]}$ in point-norm, and $A$ is said to have the \emph{strong operator approximation property} (SOAP) if there exists a net of finite rank continuous linear maps $(\Phi_i)_i$ on $A$ which converge to the identity in the strong stable point-norm topology, that is $\Phi_i \otimes \id_{\Bd[\ell^2]} \to \id_{A \otimes \Bd[\ell^2]}$ in point-norm.
A locally compact group has the \emph{approximation property} (AP) if there is a net $(u_i)_i$ of finitely supported functions on $G$ which converge to the identity in the weak* topology of $\Mcb \falg$.
Haagerup and Kraus~\cite{HK94} proved that a discrete group $G$ has the AP if and only if $\rgCst$ has the SOAP if and only if $\rgCst$ has the OAP, and studied the behaviour of the AP and weak*OAP under the von~Neumann algebra crossed product.

Dynamical systems involving the action of a group with the AP have recently been studied by Crann--Neufang~\cite{CrN19} and Suzuki~\cite{Suz19}.
Suzuki shows that if a locally compact group $G$ has the AP then, for any $C^*$-dynamical system $(A,G,\alpha)$, $\rcrs$ has the (S)OAP if and only if $A$ has the (S)OAP.
In this section we give a Herz--Schur multiplier characterisation of the SOAP for $\rcrs$, generalising the result of Haagerup--Kraus.

The weak* topology of $\Mcb \falg$ comes from the introduction of the space $Q(G)$, which has $\Mcb \falg$ as its Banach space dual.
Recall that $Q(G)$ is defined as the completion of $C_c(G)$ in the norm given by the duality with $\Mcb \falg$:
\[
    \dualp{\phi}{u} \defeq \int_G \phi(s) u(s) ds, \quad \phi \in C_c(G),\ u \in \Mcb \falg .
\]
Following \cite{HK94}, for any concrete $C^*$-algebra $C$ there is a collection of functionals on $\CB[C]$: take $x \in C \otimes \Cpt[\ell^2]$ and $\psi \in (C'' \btens \Bd[\ell^2])_*$, or $x \in C \otimes \Bd[\ell^2]$ and $\psi \in (C \otimes \Bd[\ell^2])^*$, and define
\[
    \omega_{x , \psi} (T) \defeq \psi \big( (T \otimes \id) x \big) , \quad T \in \CB[C] .
\]
It is known \cite{HK94}*{Proposition 1.5} that
\[
    Q(G) = \{ \omega_{x , \psi} : x \in \rgCst \otimes \Cpt[\ell^2] ,\ \psi \in (\vN \btens \Bd[\ell^2])_* \} .
\]

By \cite{HK94}*{Proposition 1.4} we have $\omega_{x , \psi}\in Q(G)$ for any $x \in \rgCst \otimes \Bd[\ell^2]$  and $\psi \in (\rgCst \otimes \Bd[\ell^2])^*$. 
Moreover, $G$ has AP if and only if there is a net $(u_i)_i$ of finitely supported functions on $G$ such that $\omega_{x , \psi}(u_i)\to \omega_{x , \psi}(\id)$ for any such $x$ and $\psi$. 

\begin{definition}\label{de:QGforcrossedprods}
We define
\[
    Q(A,G,\alpha) \defeq \{ \omega_{x , \psi} : x \in \rcrs \otimes \Bd[\ell^2] ,\ \psi \in (\rcrs \otimes \Bd[\ell^2])^* \} .
\]
\end{definition}

We observe first that $Q(A,G,\alpha)$ is a linear space. 
In fact, let $\omega_{x_1,\psi_1}$, $\omega_{x_2,\psi_2}\in Q(A,G,\alpha)$. Let $\tilde \psi_i\in (M_2(\rcrs \otimes \Bd[\ell^2])^*$ given by $\tilde\psi_i(x)=\psi_i(x_{i,i})$, $x=(x_{i,j})_{i,j=1}^2\in M_2(\rcrs \otimes \Bd[\ell^2])$, $i=1,2$. 
We have 
\[
    \omega_{x_1,\psi_1}+\omega_{x_2,\psi_2}=\omega_{x_1\oplus x_2, \tilde\psi_1+\tilde\psi_2},
\]
with $x_1\oplus x_2\in M_2(\rcrs \otimes \Bd[\ell^2])\simeq \rcrs \otimes \Bd[\ell^2]$.

We have $Q(A,G,\alpha) \subseteq \HSm^*$, with the duality given by $\omega(F):=\omega(S_F)$ ($\omega\in  Q(A,G,\alpha) $, $F\in \HSm$), and there is an obvious contraction $\HSm \to Q(A,G,\alpha)^*$.

%\begin{question}
%Is it true that $Q(A,G,\alpha)$ is contained in the set $\{ \omega_{x , \psi} : x \in \rcrs \otimes \Cpt[\ell^2] ,\ \psi \in ((\rcrs)'' \btens \Bd[\ell^2])_* \}$?
%
%Is the latter set the predual for the Herz--Schur $(A,G,\alpha)$-multipliers?  
%
%We have that 
%\[
%    \nrm[F]_\HS = \sup \{ | \omega_{x , \psi}(F) | : \omega_{x , \psi}\in Q(A,G,\alpha), \nrm[x] \leq 1 ,\ \nrm[\psi] \leq 1 \} .
%\]
%\end{question}

\begin{definition}\label{de:APforsystem}
Let $(A,G,\alpha)$ be a $C^*$-dynamical system, with $G$ a discrete group.
We say that $(A,G,\alpha)$ has the \emph{approximation property} (AP) if there is a net $(F_i)_{i \in I}$ of Herz--Schur $(A,G,\alpha)$-multipliers satisfying:
\begin{enumerate}[i.]
    \item each $F_i$ is finitely supported;
    \item for each $r \in G$  and $i\in I$, $F_i(r)$ is a finite rank map on $A$;
    \item $\omega(F_i) \to \omega(\id)$ for all $\omega \in Q(A,G,\alpha)$.
\end{enumerate}
\end{definition}

In the arguments below we will use the following standard consequence of the Hahn--Banach theorem several times: if $C$ is a convex set in a Banach space then the weak and norm closures of $C$ coincide.

\begin{remarks}\label{re:ondefofAP}
{\rm
\begin{enumerate}[i.]
    \item We cannot deduce the AP for $G$ from the AP for the system $(A,G,\alpha)$ in general. In fact, Lafforgue--de la Salle~\cite{LaSa11} have shown that there exist discrete exact groups without the AP, including $\mathrm{SL}(3,\ZZ)$, and if $G$ is such a group then (by \cite{Oza00}) $\rcros{\ell^\infty(G)}{G}{\beta}$ is nuclear and so has the SOAP. By Theorem~\ref{th:APequivtoOAPforsystem} below $(\ell^\infty(G), G,\beta)$ must then have the AP. 
    \item Condition (iii) above implies that $A$ has the SOAP. In fact, for $x\in A \otimes \Bd[\ell^2]$ and $\psi \in (\pi(A) \otimes \Bd[\ell^2])^*$,we have 
    \[
    	\begin{split}
    		\omega_{(\pi \otimes \id)(x) , \psi}(F_i(e)) &= \dualp{(\pi \otimes \id) \big( (F_i(e) \otimes \id)(x) \big)}{\psi} \\
    			&= \dualp{(S_{F_i} \otimes \id) \big( (\pi \otimes \id)(x) \big)}{\tilde \psi}\\
			& = \omega_{(\pi \otimes \id)(x), \tilde\psi}(S_{F_i}) \\
    			&\to \omega_{(\pi \otimes \id)(x)  ,\tilde \psi}(\id) \\
			&= \dualp{(\pi \otimes \id)(x)}{\tilde\psi} = \omega_{(\pi \otimes \id)(x) , \psi}(\id) .
    	\end{split}
    	\]
where $\tilde\psi$ is an extension of $\psi$ to a bounded linear functional on $(\rcrs) \otimes \Bd[\ell^2]$. We have therefore $F_i(e)$ converges in the strong stable point-weak topology to the identity map. 
By the Hahn--Banach theorem, there exists a net in the convex hull of $(F_i(e))_{i\in I}$ that converges to $\id$ in the strong stable point-norm topology. 
\end{enumerate}
}
\end{remarks}

\begin{theorem}\label{th:APequivtoOAPforsystem}
Let $(A,G,\alpha)$ be a $C^*$-dynamical system, with $G$ a discrete group.
The following are equivalent:
\begin{enumerate}[i.]
    \item $(A,G,\alpha)$ has the AP;
    \item $\rcrs$ has the SOAP.
\end{enumerate}
\end{theorem}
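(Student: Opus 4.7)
The plan is to prove each implication separately, with (ii) $\Rightarrow$ (i) carrying the technical content.

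\smallskip

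\noindent\textbf{(i) $\Rightarrow$ (ii).} Set $\Phi_i := S_{F_i}$. Conditions (i) and (ii) of Definition~\ref{de:APforsystem} make $\Phi_i$ a finite-rank c.b.\ map on $\rcrs$: its range is contained in the finite-dimensional subspace $\sum_{r \in \supp(F_i)} \pi(\ran F_i(r))\lambda_r$. For any $\omega = \omega_{x,\psi} \in Q(A,G,\alpha)$, the formulas
$$\omega(F_i) = \omega(\Phi_i) = \psi\bigl((\Phi_i \otimes \id)(x)\bigr), \qquad \omega(\id) = \psi(x),$$
show that condition (iii) asserts exactly $(\Phi_i \otimes \id)(x) \to x$ in the weak topology of $\rcrs \otimes \Bd[\ell^2]$ for every $x$. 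A standard Hahn--Banach / Mazur convexity argument, applied simultaneously to finite sets of such $x$, yields a net of convex combinations of the $\Phi_i$ converging to $\id$ in the stable point-norm topology; since convex combinations of finite-rank maps are finite-rank, this is SOAP for $\rcrs$.

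\smallskip

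\noindent\textbf{(ii) $\Rightarrow$ (i).} Let $(\Phi_i)$ be a net of finite-rank c.b.\ maps on $\rcrs$ with $\Phi_i \otimes \id \to \id$ in stable point-norm. The central construction is the averaging
$$F_\Phi(r)(a) := \EE_A\bigl(\Phi(\pi(a)\lambda_r)\lambda_r^*\bigr), \qquad r \in G,\ a \in A,$$
which picks out the ``$r$-th Fourier coefficient'' of $\Phi(\pi(a)\lambda_r)$. A direct calculation using $\EE_A(\pi(b)) = b$ gives $F_{S_F} = F$ for every $F \in \HSm$, so $\Phi \mapsto S_{F_\Phi}$ is a projection on $\CB[\rcrs]$ onto the image of $\HSm$. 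The main obstacle is to show that this projection is contractive in the c.b.\ norm, so that in particular $F_\Phi \in \HSm$ with $\nrm[F_\Phi]_\HS \leq \nrm[\Phi]_\cb$; I would establish this by taking a Paulsen--Wittstock dilation $\Phi = V_1^*\sigma(\cdot)V_2$ and combining it, via the covariance of $(\pi,\lambda)$ and the equivariance of $\EE_A$, with Theorem~\ref{schura}(ii) to produce the Schur $A$-multiplier data $(\rho,V,W)$ for $\mathcal{N}(F_\Phi)$. This is the crossed-product analogue of the Haagerup--Kraus multiplier projection $\CB[\rgCst] \to \Mcb \falg$.

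\smallskip

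With this in hand, set $F_i := F_{\Phi_i}$. Each $F_i(r)$ is finite rank (as a composition of finite-rank maps). Condition (iii) of Definition~\ref{de:APforsystem} follows since for $\omega = \omega_{x,\psi} \in Q(A,G,\alpha)$ one has $\omega(F_i) = \psi((S_{F_i} \otimes \id)(x))$, and weak--weak continuity of the contractive projection on bounded subsets of $\CB[\rcrs]$ transfers the stable point-norm convergence of $\Phi_i \otimes \id$ to stable point-weak convergence of $S_{F_i} \otimes \id$ to $\id$. Finally, the finite-support condition (i) is a secondary technical step: the range of $\Phi_i$ is finite-dimensional but its $\lambda_r$-expansion can spread over all of $G$, so one truncates each $F_i$ to a finite subset of $G$ using Fej\'er-type cutoffs from the canonical copy of $\rgCst$ inside the multiplier algebra and passes to convex combinations (whose weak closure contains the finitely-supported multipliers) to obtain the required net.
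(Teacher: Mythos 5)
Your (i)$\implies$(ii) direction is correct and coincides with the paper's argument. In (ii)$\implies$(i) you also start from the right formula, $F_\Phi(r)(a)=\EE_A(\Phi(\pi(a)\lambda_r)\lambda_r^*)$, which is exactly the averaging the paper uses (citing \cite{MSTT18}*{Proposition 3.4} for the fact that $F_\Phi$ is a Herz--Schur multiplier). But two of your steps have genuine gaps. First, the convergence: you derive condition (iii) of Definition~\ref{de:APforsystem} from ``weak--weak continuity of the contractive projection on bounded subsets of $\CB[\rcrs]$''. A net implementing the SOAP is \emph{not} assumed uniformly bounded in cb-norm --- that is precisely what separates (S)OAP from the CBAP --- so you cannot restrict attention to a bounded subset, and without $\sup_i\nrm[F_{\Phi_i}]_\HS<\infty$ the density argument implicit in your continuity claim does not close (it would work for $\omega_{x,\psi}$ with $x$ a finite sum of elementary tensors, but not for general $x\in\rcrs\otimes\Bd[\ell^2]$). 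Note also that your argument never actually uses the \emph{strong stable} point-norm convergence, which is a warning sign. The missing idea is the coaction identity: with $\Delta(\pi(a)\lambda_r)=\lambda_r\otimes\pi(a)\lambda_r$ and $V(\delta_r\otimes\xi)=\delta_r\otimes\delta_r\otimes\xi$ one has $S_{F_\Phi}=V^*(\id\otimes\Phi)\Delta(\cdot)V$, whence
\[
	|\omega_{x,\psi}(F_{\Phi_i})-\omega_{x,\psi}(\id)|\leq \nrm[(\id\otimes\Phi_i\otimes\id)(\Delta\otimes\id)(x)-(\Delta\otimes\id)(x)]\,\nrm[\psi],
\]
so the SOAP is applied directly to the \emph{fixed} element $(\Delta\otimes\id)(x)$ and no uniform bound is needed. (This identity is also the clean route to your claimed cb-contractivity of $\Phi\mapsto S_{F_\Phi}$.)

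Second, the finite-support condition: truncating $F_\Phi$ to a finite subset of $G$ by ``Fej\'er-type cutoffs'' is not available for a general discrete group --- sharp truncations are unbounded already for $G=\ZZ$, and bounded finitely supported cutoff multipliers on $\rgCst$ converging to the identity are essentially the AP of $G$ itself, so this step is circular. The correct fix is to modify the $\Phi_i$ \emph{before} averaging: write the finite-rank map as $\Phi_i=\sum_j\phi_j^i\otimes T_j^i$ and perturb each $T_j^i$ in norm into the algebraic crossed product $B=\lspan\{\pi(a)\lambda_r\}$; the perturbed maps still implement the SOAP, and since their ranges now lie in $B$ the resulting $F_i$ are automatically finitely supported (and finite rank). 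With these two repairs your outline becomes the paper's proof.
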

\begin{proof}
(i)$\implies$(ii) Conditions (i) and (ii) of Definition~\ref{de:APforsystem} imply that the maps $S_{F_i}$ on $\rcrs$ associated to the Herz--Schur $(A,G,\alpha)$-multipliers $F_i$ are finite rank.
By definition of $Q(A,G,\alpha)$ condition (iii) of Definition~\ref{de:APforsystem} implies that $(S_{F_i} \otimes \id_{\Bd[\ell^2]})x \to x$ weakly, for each $x \in (\rcrs) \otimes \Bd[\ell^2]$. %that is, $\id \otimes S_{F_i} \to \id$ in the point-weak topology
Hence, by the Hahn--Banach theorem, there is a convex combination of the $S_{F_i}$ which converges to the identity in the strong stable point-norm topology. %Hahn--Banach implies that the weak and norm closure of a convex set coincide, so since $x$ is in the weak closure of (convex combinations of) $(\id \otimes S_{F_i})x$ there is a convex combination of the $(\id \otimes S_{F_i})x$ which converges to $x$ in norm. We identify this last with $\id \otimes T_j$, where each $T_j$ is a convex combination of the $S_{F_i}$.

(ii)$\implies$(i) Let $(\Phi_i)_i$ be a net of completely bounded maps implementing the SOAP of $\rcrs$.
First, let us show that we may assume $\ran \Phi_i$ is contained in $B := \lspan \{ \pi(a) \lambda_r : a \in A ,\ r \in G \}$.
Write $\Phi_i = \sum_{j=1}^{k_i} \phi_j^i \otimes T_j^i$, where $\phi_j^i \in (\rcrs)^*$ and $T_j^i \in \rcrs$. If the $T_j^i$ are not elements of $B$ then, for each $n \in \NN$ find $T_{j,n}^i \in B$ such that $\Vert T_j^i - T_{j,n}^i \Vert < (n k_i \max_j \Vert \phi^i_j \Vert)\inv$.
Then $\Phi_{i,n} := \sum_{j=1}^{k_i} \phi_j^i \otimes T_{j,n}^i$ is a net which implements the SOAP of $\rcrs$ and has $\ran \Phi_{i,n} \subset B$.

Define
\[
    F_i : G \to \CB ;\ F_i(r)(a) \defeq \EE_A \big( \Phi_i( \pi(a) \lambda_r ) \lambda_{r\inv} \big) ,
\]
where $\EE_A$ is the canonical conditional expectation $\rcrs \to A$.
Then each $F_i$ is a Herz--Schur $(A,G,\alpha)$-multiplier by the same calculation as \cite{MSTT18}*{Proposition 3.4}, and the assumption on the range of $\Phi_i$ implies that $F_i$ is finitely supported and finite rank.

To prove the required convergence let $\Delta$ be the coaction of $G$ on $\rcrs$, so
\[
	\Delta (\pi(a) \lambda_r) = \lambda_r \otimes \pi(a) \lambda_r ,
\]
and define $V (\delta_r \otimes \xi) \defeq \delta_r \otimes \delta_r \otimes \xi$ ($r \in G,\ \xi \in \HH$); then $V^* \Delta(x) V = x$ ($x \in \rcrs$) and $S_{F_i} = V^* (\id \otimes \Phi_i) \Delta ( \cdot ) V$.
For $x \in (\rcrs) \otimes \Bd[\ell^2]$ and $\psi \in ((\rcrs) \otimes \Bd[\ell^2])^*$ calculate
\[
\begin{split}
	&| \omega_{x,\psi}(F_i) - \omega_{x,\psi}(\id) | = | \dualp{(S_{F_i} \otimes \id)(x) - x}{\psi} | \\
		&\qquad \qquad \qquad = | \dualp{(V \otimes \id)^*(\id \otimes \Phi_i \otimes \id)(\Delta \otimes \id)(x)(V \otimes \id) - x}{\psi} | \\
		&\qquad \qquad \qquad \leq \nrm[(\id \otimes \Phi_i \otimes \id)(\Delta \otimes \id)(x) - (\Delta \otimes \id)(x)] \nrm[\psi] \to 0 .
\end{split}
\]
\end{proof}

\begin{remark}\label{re:APbyCrannNeufang}
{\rm 
Crann--Neufang \cite{CrN19} also define a version of the AP for a $C^*$-dynamical system $(A,G,\alpha)$, based on a slice map property. 
They show \cite{CrN19}*{Corollary 4.11} that if $G$ has the AP then any $C^*$-dynamical system $(A,G,\alpha)$ has the AP in their sense. 
On the other hand, if $(A,G,\alpha)$ has the AP in the sense of Definition~\ref{de:APforsystem} then $A$ must have the SOAP.
Therefore these two notions of AP for a $C^*$-dynamical system are different.
%When $G$ is discrete their definition (in our notation) is as follows: $(A,G,\alpha)$ has the AP (in the sense of Crann--Neufang) if, for any closed $G$-invariant subspace $X \subseteq A$, 
%\[
%	X \rtimes_{\alpha ,r}^\mathcal{F} G \defeq \{ T \in \rcrs : \EE \big( \lambda(f) T \lambda(g) \big) \in X \ \text{for all $f,g \in C_c(G)$} \}
%\]
%is equal to $X \rtimes_{\alpha ,r} G \defeq \overline{\langle \pi(x) (1 \otimes \lambda_r) : x \in X , \ r \in G \rangle }$ (the inclusion $X \rtimes_{\alpha ,r} G \subseteq X \rtimes_{\alpha ,r}^\mathcal{F} G$ always holds).
%Compare this ``internal slice map property'' to the ``external'' slice map property, equivalent to the SOAP \cite{BrO08}*{Theorem 12.4.4}, which results from Theorem~\ref{th:APequivtoOAPforsystem}.
}
\end{remark}

\begin{remark}
{\rm 
Finitely supported functions on $G$ are elements of the Fourier algebra $A(G)$, and hence the Fourier--Stieltjes algebra $B(G)$ of $G$. B\'{e}dos--Conti~\cite{BC16} introduced a notion of Fourier--Stieltjes algebra $B(\Sigma)$ of a $C^*$-dynamical system $\Sigma=(A,G,\alpha)$, by considering the set of $A$-valued coefficients of the equivariant representations of $\Sigma$.
It would be interesting to know whether the net $(F_i)_{i\in I}$  of Herz--Schur $(A,G,\alpha)$ multipliers in Definition~\ref{de:APforsystem} can be chosen from $B(\Sigma)$ in analogy with the group case. For this it would be enough to know whether $\Phi_i$ that implement the SOAP of $\rcrs$ can be chosen to be decomposable, see \cite{BC16}*{Proposition 4.1, Corollary 4.6}.  
} 
\end{remark}

It is known that if $(A,G,\alpha)$ is a $C^*$-dynamical system such that $G$ has the AP and $A$ has the SOAP then $\rcrs$ has SOAP (see \cite{Suz19}*{Theorem 3.6}). For discrete groups we now explain this result in our terms, further illustrating the utility of Herz--Schur multipliers for approximation results.
If $(u_i)_{i\in I}$ is a net of finitely supported functions in $A(G)$ such that $u_i\to 1$ weak$^*$ and $(\Phi_j)_{j\in J}$ is a net of finite rank linear maps on $A$ implementing SOAP of $A$, then we define  
\begin{equation}\label{hsm}
    F_{i,j}: G \to \CB[A] ;\ F_{i,j}(r)(a) := u_i(r) \Phi_j(a), \quad a\in A,\ i\in I,\ j\in J.
\end{equation}
We claim that there is a subnet of $(F_{i,j})_{i,j}$ which satisfies the conditions of Definition~\ref{de:APforsystem}. 
First we check that each $F_{i,j}$ is a Herz--Schur multiplier of $(A,G,\alpha)$.
For a finite sum $x = \sum_s \pi(a_s)\lambda_s \in \rcrs$ we have
\[
\begin{split}
    \|S_{F_{i,j}}(x)\| &= \left\| \sum_s u_i(s)\pi \big(\Phi_j(a_s) \big) \lambda_s \right\| \\
        &\leq \|u_i\|_\infty\|\Phi_j\|\sum_{s\in\supp u_i}\|a_s\|\leq \|u_i\|_\infty\|\Phi_j\|\sum_{s\in\supp u_i}\|\mathcal E_A(x\lambda_s^*)\| \\
        &\leq \|u_i\|_\infty\|\Phi_j\||\supp u_i|\|x\|,
\end{split}
\]
where $\mathcal E_A : \rcrs\to A$ is the conditional expectation onto $A$. 
This shows that $S_{F_{i,j}}$ is bounded. Similarly one sees that it is completely bounded.
Let now $a\in \rcrs \otimes \Bd \simeq \rcros{(A\otimes \Bd)}{G}{\alpha\otimes\id}$ and $f\in (\rcrs\otimes\Bd)^*$. 
Let $\mathcal{E}$ be the canonical conditional expectation  from $\rcros{(A\otimes \Bd)}{G}{\alpha\otimes\id}$ onto $A\otimes \Bd$ and write $a_s := \mathcal{E} (a\lambda_s^*)$ ($s\in G$).
Then
\[
\begin{split}
    & \big| \omega_{a,f}(F_{i,j})  -\omega_{a,f}(\id) \big| = \left| f \left( \sum_{s\in \supp u_i} u_i(s) \pi \big( (\Phi_j\otimes\id) (a_s) \big) \lambda_s \right) - f(a) \right| \\
        &\; \leq \left| f \left( \sum_{s\in \supp u_i} u_i(s) \big( \pi \big( (\Phi_j\otimes\id)(a_s) \big) - \pi(a_s) \big) \lambda_s \right) \right| + \big| \omega_{a,f}(u_i) - \omega_{a,f}(1) \big| .
\end{split}
\]
We have $\omega_{a,f}\in Q(G)$ (see \cite{Suz19}*{Lemma 3.2}) and hence the second term converges to zero. 
If $S=\supp u_i$ and $x_i(s)=u_i(s)a_s\in C(S, A\otimes \Bd ) \simeq A\otimes C(S,\Bd) \simeq A\otimes\Bd[\HH^{(n)}]$, where $n=|S|$, then, 
having the natural completely bounded embedding $\iota : C(S)\otimes A \to \rcrs$ we obtain %\marginpar{\tiny Is it clear with $\id$:s below where they act or should add the spaces?}
\[
\begin{split}
    &\left| f \left( \sum_{s\in S} u_i(s) \big( \pi \big( \Phi_j\otimes\id(a_s) \big) - \pi(a_s) \big) \lambda_s \right) \right| \\
        &\qquad = \left| \big( f\circ (\iota\otimes\id) \big) \big( (\Phi_j\otimes\id)(x_i) - x_i \big) \right|.
    \end{split}
\]
As $\Phi_j \otimes \id_{\Bd[\HH^{(n)}]}\to \id_{A\otimes\Bd[\HH^{(n)}]}$ in point norm, for each $i$ we can choose $j(i)$ such that 
\[
    \left| \big( f\circ (\iota\otimes\id) \big) \big( \Phi_{j(i)} \otimes \id(x_i) - x_i \big) \right| \to_{i\in I} 0.
\]
Hence
\[
    \omega_{a,f}(F_{i,j(i)})-\omega_{a,f}(\id)\to_{i\in I} 0.
\]

We remark that the multipliers of the form (\ref{hsm}) are in B\'{e}dos--Conti's Fourier--Stieltjes algebra $B(\Sigma)$ if $\Phi_j$ are decomposable and commute with the action, \textit{i.e.}\ $\alpha_g\circ\Phi_j=\Phi_j\circ\alpha_g$ ($g\in G$).

\medskip

We finish this section with some observations concerning duality between the space of Herz--Schur multipliers of a dynamical system $(A,G,\alpha)$ and the space $Q(A,G,\alpha)$. 
First we introduce a subclass of the Herz--Schur multipliers considered so far. Recall that for a $C^*$-algebra $A$ we write $Z(A)$ for the centre of $A$ and $M(A)$ for the multiplier algebra of $A$.

\begin{definition}\label{de:centralmults}
Let $(A,G,\alpha)$ be a $C^*$-dynamical system, with $G$ discrete. 
A Herz--Schur $(A,G,\alpha)$-multiplier $F : G \to \CB$ will be called \emph{central} if $F(r) \in Z(M(A))$ for all $r \in G$.
Write $\mathfrak{S}_{\rm cent}(A,G,\alpha)$ for the algebra of central Herz--Schur $(A,G,\alpha)$-multipliers.
\end{definition}

Central multipliers have been used by a number of authors, including Anantharaman-Delaroche (see \textit{e.g.}\ \cite{AD02}*{Proposition 6.4}) and Dong--Ruan (see \cite{DR12}*{Section 3}) to study crossed products.
If $u : G \to \CC$ and we define $F_u : G \to \CB;\ F_u(r)(a) \defeq u(r) a$ ($r \in G,a \in A$) then it is shown in \cite{MTT18}*{Proposition 4.1} that $F_u$ is a Herz--Schur $(A,G,\alpha)$-multiplier if and only if $u \in \Mcb \falg$ (\textit{i.e.}\ $u$ is a classical Herz--Schur multiplier of $G$), and it is obvious that $F_u$ is central in this case; hence $\mathfrak{S}_{\rm cent}(A,G,\alpha)$ contains the canonical copy of $\Mcb\falg$.
%We believe that it is not possible to choose the multipliers in Definition~\ref{de:exactsystem} or Definition~\ref{de:APforsystem} to be central; for example in the case of Definition~\ref{de:exactsystem} this would imply a net $(\Phi_i : A \to A)_i$ implementing exactness of $A$ and such that $\Phi_i$ is multiplication by some $a_i \in Z(A)$.

Consider now a $C^*$-dynamical system $(C_0(Z),G,\alpha)$, where $Z$ is a locally compact space and $G$ a discrete group. 
If $F : G \to C_b(Z)$ is an element of $\mathfrak{S}_{\rm cent}(C_0(Z),G,\alpha)$ then there is a function $\varphi : Z\times G \to \CC$ such that $F_\varphi(r)(a)(z) = \varphi(z,r)a(z)$.  
We write $\|\varphi\|_{\rm m}$ for $\|F_\varphi\|_{\rm m}$ and write simply $\varphi$ instead of $F_\varphi$. Such multipliers are exactly Herz--Schur multipliers of the corresponding transformation groupoid $\mathcal G = Z\times G$ --- see \cite{MTT18}*{Section 5} and \cite{renault}.  

Assume now that $Z$ is discrete.  As $|\varphi(z,s)|\leq \|\varphi\|_{\rm m}$ ($(z,s)\in Z\times G$) any finitely supported function $\psi : Z\times G \to \CC$ defines a bounded linear functional on $\mathfrak S_{\rm cent}(C_0(Z),G,\alpha)$ by
\[
    \dualp{\psi}{\varphi} := \sum_{z,s} \psi(z,s)\varphi(z,s), \quad \varphi\in \mathfrak S_{\rm cent}(C_0(Z),G,\alpha).
\]
Define $\Qcent(Z,G,\alpha)$ to be the completion of $C_c(Z\times G)$ in  the dual space $(\mathfrak S_{\rm cent}(C_0(Z),G,\alpha))^*$. 
The following results are similar to the corresponding results in the group case (see \cite{HK94}*{Section 1} and \cite{BrO08}*{Lemma D.7, Lemma D.8}) and their proofs are essentially the same, so we give only sketches. 

\begin{lemma}\label{qxga}
If $a\in (\rcros{C_0(Z)}{G}{\alpha}) \otimes\Bd[\ell^2]$ and $f\in ((\rcros{C_0(Z)}{G}{\alpha}) \otimes\Bd[\ell^2])^*$, \textit{i.e.}\ $\omega_{a,f}\in Q(C_0(Z),G,\alpha)$, or if $a \in (\rcros{C_0(Z)}{G}{\alpha}) \otimes \Cpt[\ell^2]$ and $f\in ((\rcros{C_0(Z)}{G}{\alpha})''\btens\Bd[\ell^2])_*$, then $\omega_{a,f}\in \Qcent(Z, G,\alpha)$. 
\end{lemma}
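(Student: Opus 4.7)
The plan is to verify the statement first on a dense class of ``elementary'' elements, and then pass to norm limits in $\mathfrak{S}_{\rm cent}(C_0(Z),G,\alpha)^*$. For the elementary class I would take finite sums
\[
    a_0 = \sum_{(z,s) \in E \times F} \pi(\delta_z) \lambda_s \otimes T_{z,s},
\]
with $E \subset Z$ and $F \subset G$ finite (and the $T_{z,s}$ of finite rank in the second situation). Since $Z$ is discrete, $\lspan\{\delta_z : z \in Z\}$ is norm-dense in $C_0(Z)$, so such $a_0$ are norm-dense in both $(\rcros{C_0(Z)}{G}{\alpha}) \otimes \Bd[\ell^2]$ and $(\rcros{C_0(Z)}{G}{\alpha}) \otimes \Cpt[\ell^2]$. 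For a central Herz--Schur multiplier $\varphi$, the identity $F_\varphi(s)(\delta_z) = \varphi(z,s)\delta_z$ gives
\[
    (S_\varphi \otimes \id)(a_0) = \sum_{(z,s) \in E \times F} \varphi(z,s)\, \pi(\delta_z)\lambda_s \otimes T_{z,s},
\]
so that $\omega_{a_0,f}(\varphi) = \sum_{(z,s) \in E \times F} \psi_0(z,s)\varphi(z,s)$, where $\psi_0(z,s) := f(\pi(\delta_z)\lambda_s \otimes T_{z,s})$. Hence $\omega_{a_0,f}$ is represented by the finitely supported function $\psi_0 \in C_c(Z \times G)$, and so belongs to $\Qcent(Z,G,\alpha)$ by definition.

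The second ingredient is the continuity bound $|\omega_{a,f}(\varphi)| \leq \|\varphi\|_{\rm m}\, \|a\|\, \|f\|$, which is immediate from $\|S_\varphi\|_\cb = \|\varphi\|_{\rm m}$ together with the fact that $S_\varphi \otimes \id$ is completely bounded with the same norm. In other words, $a \mapsto \omega_{a,f}$ is norm-continuous from $(\rcros{C_0(Z)}{G}{\alpha}) \otimes \Bd[\ell^2]$ (respectively, from $(\rcros{C_0(Z)}{G}{\alpha}) \otimes \Cpt[\ell^2]$) into $\mathfrak{S}_{\rm cent}(C_0(Z),G,\alpha)^*$. Approximating a general $a$ in norm by elementary $a_n$ therefore yields $\omega_{a_n,f} \to \omega_{a,f}$ in the dual norm, and since each $\omega_{a_n,f}$ comes from $C_c(Z \times G)$, the limit $\omega_{a,f}$ lies in the completion $\Qcent(Z,G,\alpha)$.

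The most delicate point will be the second situation of the lemma, where $f$ is only a predual functional on $(\rcros{C_0(Z)}{G}{\alpha})'' \btens \Bd[\ell^2]$ rather than on the crossed product itself. There one has to use the canonical weak$^*$-continuous extension of $S_\varphi \otimes \id$ to this von Neumann tensor product, exactly as constructed after Theorem~\ref{schura} via the bidual and the conditional expectation, so that the pairing $\omega_{a,f}(\varphi) = f((S_\varphi \otimes \id)(a))$ is unambiguously defined. Once this is in place, the elementary computation above continues to be valid after composing with $f$, and the approximation argument goes through unchanged.
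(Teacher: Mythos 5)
Your proof is correct and follows essentially the same route as the paper's: evaluate $\omega_{a,f}$ on finite sums $\sum_{(z,s)}\pi(\delta_z)\lambda_s\otimes T_{z,s}$ to see it is given by a finitely supported function on $Z\times G$, then use the bound $|\omega_{a,f}(\varphi)|\leq\|a\|\,\|f\|\,\|\varphi\|_{\rm m}$ to pass to general $a$ by density. The only extra wrinkle you raise --- extending $S_\varphi\otimes\id$ weak$^*$-continuously in the second case --- is harmless but unnecessary, since $a$ lies in the $C^*$-tensor product with $\Cpt[\ell^2]$, so $(S_\varphi\otimes\id)(a)$ already sits inside $(\rcros{C_0(Z)}{G}{\alpha})''\btens\Bd[\ell^2]$ where $f$ can be evaluated directly.
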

\begin{proof}
Let $F$ be a finite subset of $Z\times G$ and consider $a=\sum_{(z,s)\in F}\pi\rtimes\lambda(\delta_{(z,s)})\otimes b(z,s)$, where $b(z,s)\in \Bd[\ell^2]$ for the first case and $b(z,s)\in \Cpt[\ell^2]$  for the second case. Then
\[
    \omega_{a,f}(\varphi) = \sum_{(z,s)\in F}\varphi(z,s) f \big( \pi\rtimes\lambda(\delta_{(z,s)})\otimes b(z,s) \big) , \quad \varphi\in \mathfrak{S}_{\rm cent}(C_0(Z),G,\alpha)
\]
showing that $\omega_{a,f}=\psi$ for some $\psi \in C_c(Z\times G)\subset \Qcent(Z,G,\alpha)$.  
For general $a$ the claim follows from $|\omega_{a,f}(\varphi)|\leq \|a\|\|f\|\|\varphi\|_{\rm m}$, $\varphi\in \mathfrak{S}_{\rm cent}(C_0(Z),G,\alpha)$.
\end{proof}

The following duality result closely resembles the duality between $Q(G)$ and $\Mcb \falg$.

\begin{proposition}\label{pr:QdualisHScent}
Let $(C_0(Z),G,\alpha)$ be a $C^*$-dynamical system, with $Z$ and $G$ discrete. 
We have the duality $\Qcent(Z,G,\alpha)^* = \mathfrak{S}_{\rm cent}(C_0(Z),G,\alpha)$; moreover 
\[
\begin{split}
    \Qcent(Z,G,\alpha) = \{ \omega_{a,f}: &a\in (\rcros{C_0(Z)}{G}{\alpha}) \otimes \Cpt[\ell^2], \\ &f\in ((\rcros{C_0(Z)}{G}{\alpha})'' \otimes \Bd[\ell^2])_*\} .
\end{split}
\]
\end{proposition}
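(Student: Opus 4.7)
The proof follows the classical Haagerup--Kraus duality $Q(G)^* = \Mcb\falg$, adapted to the transformation groupoid $Z\times G$; as the paper's preceding remark indicates, the arguments are essentially the same, so I only sketch the steps.

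For the duality $\Qcent^* = \mathfrak S_{\rm cent}(C_0(Z),G,\alpha)$, the pairing $\langle\psi,\varphi\rangle\defeq\sum_{(z,s)}\psi(z,s)\varphi(z,s)$ on $C_c(Z\times G)\times\mathfrak S_{\rm cent}$ is bounded by $\|\varphi\|_{\rm m}\|\psi\|_{\Qcent}$ by the very definition of $\|\cdot\|_{\Qcent}$, giving a contraction $\mathfrak S_{\rm cent}\to\Qcent^*$. To upgrade this to an isometric isomorphism I apply the dilation Theorem~\ref{schura} to the Schur $C_0(Z)$-multiplier $\mathcal N(F_\varphi)$: writing $\mathcal N(F_\varphi)(s,t)(a)(z)=\varphi(s\cdot z,st\inv)a(z)$ and using the Stinespring form $\mathcal N(F_\varphi)(s,t)(a)=V(s)^*\rho(a)W(t)$ yields a pointwise bound $|\varphi(z,s)|\le\|\varphi\|_{\rm m}$ and, by testing against finite linear combinations of point masses $\delta_{(z,s)}$, recovers $\|\varphi\|_{\rm m}$ as $\sup_{\|\psi\|_{\Qcent}\le1}|\langle\psi,\varphi\rangle|$. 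For surjectivity, given $\Phi\in\Qcent^*$ I set $\varphi(z,s)\defeq\Phi(\delta_{(z,s)})$; the pointwise bound just established shows $\|\delta_{(z,s)}\|_{\Qcent}\le1$, so $\varphi\in\ell^\infty(Z\times G)$ with $\|\varphi\|_\infty\le\|\Phi\|$, and then $F_\varphi$ is shown to be a central Herz--Schur multiplier by building the required Stinespring dilation out of $\Phi$ applied to the concrete $\omega_{a,f}$-representatives of $\Qcent$ supplied by Lemma~\ref{qxga}.

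For the characterization $\Qcent=\{\omega_{a,f}\}$ with $a,f$ as stated, Lemma~\ref{qxga} gives $\supseteq$. For $\subseteq$, the explicit construction in the proof of Lemma~\ref{qxga} shows every $\psi\in C_c(Z\times G)$ has the form $\omega_{a,f}$ with $a$ compact, and density of $C_c(Z\times G)$ in $\Qcent$ then reduces the claim to checking that
\[
	\mathcal A\defeq\{\omega_{a,f}: a\in(\rcros{C_0(Z)}{G}{\alpha})\otimes\Cpt[\ell^2],\ f\in((\rcros{C_0(Z)}{G}{\alpha})''\btens\Bd[\ell^2])_*\}
\]
is norm-closed in $\Qcent$; this follows as in Haagerup--Kraus since $\mathcal A$ is the image of the projective Banach tensor product $(\rcros{C_0(Z)}{G}{\alpha}\otimes\Cpt[\ell^2])\,\hat\otimes\,((\rcros{C_0(Z)}{G}{\alpha})''\btens\Bd[\ell^2])_*$ under the contraction $a\otimes f\mapsto\omega_{a,f}$.

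The main obstacle will be the surjectivity step in the duality: extracting a genuine completely bounded multiplier (and not merely a bounded function) from an abstract functional $\Phi\in\Qcent^*$. Pointwise bounds are automatic, but producing the Stinespring dilation demanded by Theorem~\ref{schura} requires a careful dualisation of the $\omega_{a,f}$-description of $\Qcent$, and this is where adapting the Haagerup--Kraus proof from the group case to the $C^*$-dynamical setting needs the most care.
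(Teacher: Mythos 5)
Your outline tracks the paper's (a natural contraction $\mathfrak{S}_{\rm cent}(C_0(Z),G,\alpha)\to\Qcent(Z,G,\alpha)^*$, isometry, surjectivity via $\varphi(z,s)\defeq\Phi(\delta_{(z,s)})$, and the second statement from Lemma~\ref{qxga} plus a closedness argument), but the step you offer for the isometry does not work. Testing against point masses only yields $\sup_{\|\psi\|_{\Qcent}\le1}|\dualp{\psi}{\varphi}|\ge\sup_{(z,s)}|\varphi(z,s)|=\|\varphi\|_\infty$, and the pointwise bound $|\varphi(z,s)|\le\|\varphi\|_{\rm m}$ obtained from the Stinespring form points in the wrong direction; since $\|\varphi\|_\infty$ is in general strictly smaller than the multiplier norm (already for classical Herz--Schur multipliers), this does not recover $\|\varphi\|_{\rm m}$. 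The mechanism the paper actually uses is different: $\|\varphi\|_{\rm m}=\|S_{F_\varphi}\|_{\cb}$ is the supremum of $|\omega_{a,f}(\varphi)|=|f((S_{F_\varphi}\otimes\id)a)|$ over $a$ in the unit ball of $(\rcros{C_0(Z)}{G}{\alpha})\otimes\Cpt[\ell^2]$ and $f$ in the unit ball of $((\rcros{C_0(Z)}{G}{\alpha})''\btens\Bd[\ell^2])_*$, and Lemma~\ref{qxga} together with the bound $|\omega_{a,f}(\varphi)|\le\|a\|\|f\|\|\varphi\|_{\rm m}$ from its proof places every such $\omega_{a,f}$ in the unit ball of $\Qcent(Z,G,\alpha)$; that is what gives $\|\varphi\|_{\rm m}\le\|\varphi\|_{(\Qcent(Z,G,\alpha))^*}$. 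You cite Lemma~\ref{qxga} only for the surjectivity and for the inclusion $\supseteq$ in the second statement, but it is precisely the engine of the isometry.

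A second gap: your justification for the norm-closedness of $\mathcal A=\{\omega_{a,f}\}$ --- that it is the image of a projective tensor product under a contraction --- is not a valid argument, because the image of a bounded operator between Banach spaces need not be closed. The correct reason, which is the content of the Haagerup--Kraus lemma the paper invokes (\cite{HK94}*{Lemma 1.6}), is the stabilisation trick: a norm-convergent sum $\sum_n\omega_{a_n,f_n}$ with $\sum_n\|a_n\|\,\|f_n\|<\infty$ is again a single $\omega_{a,f}$, since $\Cpt[\ell^2]\otimes\Cpt[\ell^2]\cong\Cpt[\ell^2]$ and $\Bd[\ell^2]\btens\Bd[\ell^2]\cong\Bd[\ell^2]$ allow one to assemble $a=\oplus_na_n$ and $f=\oplus_nf_n$ (a $2\times2$-matrix version of this is how the paper shows $Q(A,G,\alpha)$ is linear). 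Your surjectivity step is described too loosely to assess in detail, but the idea --- read off $\varphi$ from $\Phi$ on point masses and verify the multiplier property from the duality with $C_c(Z\times G)$ --- is the paper's.
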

\begin{proof}
The first part is similar to the proof of \cite{BrO08}*{Lemma D.8}. 
By definition $\Qcent(Z,G,\alpha)\subset (\mathfrak{S}_{\rm cent}(C_0(Z),G,\alpha))^*$, so we have a natural contraction
$\mathfrak S_{\rm cent}(C_0(Z),G,\alpha) \to (\Qcent(Z,G,\alpha))^*$. 
That it is an isometry follows from
\[
\begin{split}
    \|\varphi\|_{\rm m} &= \sup\{|\omega_{a,f}(\varphi)|:  a \in (\rcros{C_0(Z)}{G}{\alpha}) \otimes \Cpt[\ell^2])_1, \\ &\qquad \qquad \qquad \qquad f \in (((\rcros{C_0(Z)}{G}{\alpha})'' \btens \Bd[\ell^2])_*)_1\} \\
        &\leq \sup \{ |h(\varphi)| : h\in \Qcent(Z,G,\alpha), \| h \| \leq 1\} = \|\varphi\|_{(\Qcent(Z,G,\alpha))^*},
\end{split}
\]
where $B_1$ denotes the unit ball of a Banach space $B$. 

To see the surjection, to each $\psi\in \Qcent(Z,G,\alpha)^*$ we associate $\varphi(z,s) := \dualp{\psi}{\delta_{(z,s)}}$, observing that $\delta_{(z,s)}\in C_c(Z\times G)\subset \Qcent(Z,G,\alpha)$. 
For any $\omega\in C_c(Z\times G)$ we have 
\[
    \sum_{z,s} \varphi(z,s)\omega(z,s) = \dualp{\psi}{\omega} ,
\]
which implies that $\varphi$ is a central Herz--Schur multiplier of $(C_0(Z),G,\alpha)$ and moreover $\varphi = \psi$ on $\Qcent(Z,G,\alpha)$. 

The second statement follows from  Lemma~\ref{qxga} and  \cite{HK94}*{Lemma 1.6} with  $A = \rcros{C_0(Z)}{G}{\alpha}$, $X = \mathfrak{S}_{\rm cent}(C_0(Z),G,\alpha) \subset \CB[A, A'']$ and $E = \Qcent(Z,G,\alpha)$.
\end{proof}

Let $\theta$ be a faithful representation of $C_0(Z)$ on $\HH_\theta$ and form the corresponding regular covariant representation $\tilde{\theta} \rtimes \lambda$ of $(C_0(Z),G,\alpha)$ on $\ell^2(G,\HH_\theta)$. 
We write $\mathfrak{S}^\theta(C_0(Z),G,\alpha)$ for the set of Herz--Schur $(C_0(Z),G,\alpha)$-multipliers which admit a weak* continuous extension to $\overline{\tilde{\theta} \rtimes \lambda(\ell^1(G,C_0(Z)))}^{w^*}$ (this is the weak* closure of $\rcros{C_0(Z)}{G}{\alpha}$ in its representation on $\Bd[\ell^2(G,\HH_\theta)]$).

\begin{questions}
{\rm 
\begin{enumerate}
    \item Is $\mathfrak{S}_{\rm cent}(C_0(Z),G,\alpha)$ a dual space for general locally compact spaces $Z$? Is there a faithful representation $\theta$ of $C_0(Z)$ such that $\mathfrak{S}_{\rm cent}(C_0(Z),G,\alpha) \cap \mathfrak{S}^\theta(C_0(Z),G,\alpha)$ is a dual space?
    
    \item Is $\mathfrak{S}(C_0(Z),G,\alpha)$ a dual space? Is there a faithful representation $\theta$ of $C_0(Z)$ such that $\mathfrak{S}^\theta(C_0(Z),G,\alpha)$ is a dual space?

    \item For discrete $Z$ Proposition~\ref{pr:QdualisHScent} shows that $Q(C_0(Z),G,\alpha)$ is a subspace of the space 
\[
    \{ \omega_{a,f} : a\in (\rcros{C_0(Z)}{G}{\alpha}) \otimes \Cpt[\ell^2], f\in (\rcros{C_0(Z)}{G}{\alpha})''\btens \Bd[\ell^2])_* \} 
\] when both are considered as subspaces of $(\mathfrak{S}_{\rm cent}(C_0(Z),G,\alpha))^*$.
Is the same true for $C^*$-dynamical systems $(A,G,\alpha)$, with $G$ discrete, $A$ not assumed commutative and both subspaces considered as subspaces of $(\mathfrak{S}(C_0(Z),G,\alpha))^*$? In this case the result of Theorem 4.4 will be true when in Definition 4.2 (iii) we take  $\omega$ from  this larger space.
\end{enumerate}
}
\end{questions}

\noindent
\textbf{Acknowledgement.} We would like to thank the anonymous referee for valuable suggestions that led to improvements in the paper.

\bibliographystyle{}
\bibliography{exactnessbib}

\end{document}